\newcommand{\bg}{\bar{g}}
\newtheorem{theorem}{Theorem}[section]
\newtheorem{lemma}[theorem]{Lemma}
\newtheorem{claim}[theorem]{Claim}
\newtheorem{proposition}[theorem]{Proposition}
\newtheorem{corollary}[theorem]{Corollary}
\theoremstyle{definition}
\newtheorem{remark}[theorem]{Remark}
\numberwithin{equation}{section}
\subjclass[2020]{53C24, 53C25}
\title[Mass and Volume]{Mass and volume of four-dimensional \\ Einstein metrics }
\author{Matthew J. Gursky}
\address{Department of Mathematics \\
         University of Notre Dame\\
         Notre Dame, IN 46556}
\email{\href{mgursky@nd.edu}{mgursky@nd.edu}}
\author{Andrea Malchiodi}
\address{Classe di Scienze \\
         Scuola Normale Superiore \\
         Piazza dei Cavalieri 7, 56126 \\
         Pisa, Italy}
\email{\href{andrea.malchiodi@sns.it}{andrea.malchiodi@sns.it}}
\begin{document}

\maketitle

\begin{abstract}  Let $(M^4,\bg)$ be an Einstein manifold, where $M^4$ is a smooth, closed, oriented four-manifold $M^4$ and $\bg$ has positive Einstein constant. Given a point $0 \in M^4$, let $G$ denote 
the (positive) Green's function $G$ of the conformal laplacian $L_{\bg}$; then $g = G^2\bg$ is a complete, scalar-flat, asymptotically flat metric on $\widehat{M} = M \setminus \{ 0 \}$.  We first show that the ADM mass of $g$ can 
be expressed as an integral over $\widehat{M}$, then use this identity to prove a lower bound for the mass of $g$ in terms of the volume of $\bg$.   As corollaries, we prove a 'mass times volume' inequality, plus various mass gap theorems 
characterizing the round metric on $S^4$ and the Fubini-Study metric on $\mathbb{CP}^2$.  
\end{abstract}

\section{Introduction and Statement of Main results}

A long open question is whether the round metric $g_0$ is the unique Einstein metric on $S^4$.  For higher dimensional spheres there are many examples of non-round 
Einstein metrics (e.g., \cite{Bohm}, \cite{BGK}, \cite{CS}, \cite{Jen}, \cite{BK}).   

If $\bg$ is a {\em positive} Einstein metric on $S^4$ normalized so that $Ric(\bg) = 3 \bg$, then the Bishop volume comparison theorem implies that 
\begin{align} \label{Bishop} 
Vol(\bg) \leq Vol(g_0) = \frac{8}{3}\pi^2.
\end{align}
Moreover, by Cheng's diameter rigidity result \cite{Cheng}, equality holds if and only if $\bg$ is isometric to $g_0$.  A fairly straightforward second variation calculation for the normalized Einstein-Hilbert action shows that $g_0$ is isolated as an Einstein metric (see \cite{Besse}, Chapter 12H), hence for a non-round Einstein metric $\bg$ we have 
\begin{align} \label{BVC} 
Vol(\bg) \leq \left(1 - \epsilon_0 \right) Vol(g_0)
\end{align} 
for some $\epsilon_0 > 0$.   
 
In \cite{Gur}, the first author made a quantitative improvement of this 'volume rigidity' statement: in fact, for $\bg$ non-round, we must have 
\begin{align} \label{GVC}
Vol(\bg) < \frac{1}{3} Vol(g_0). 
\end{align}
A similar gap result holds for the volume any Einstein metric on $\mathbb{CP}^2$, in this case relative to the volume of $g_{FS}$, the Fubini-Study metric. 
Akutagawa-Endo-Seshadri \cite{AES} subsequently showed that the constant $1/3$ in (\ref{GVC}) is not sharp: there is some $\epsilon_1 > 0$ such that 
\begin{align} \label{GVC}
Vol(\bg) < \left( \frac{1}{3} - \epsilon_1 \right) Vol(g_0). 
\end{align}

Without additional assumptions it seems very difficult to give {\em lower} bounds on the volume of a Einstein metric.  For example, on $S^5$ there are Einstein metrics $\bg$ with $Ric(\bg) = 4 \bg$ and arbitrarily small volume \cite{CS}.  In this article we study another invariant associated to positive Einstein metrics (the ADM mass), and prove a series of gap estimates.   As we will see, these estimates allow for the metric to have arbitrarily small volume.  

In the following we let $M^4$ be a smooth, closed, oriented four-manifold, and $\bg$ a positive Einstein metric on $M^4$ normalized so that $Ric(\bg) = 3 \bg$.  It follows 
that the scalar curvature and the conformal laplacian satisfy $\overline{R} = 12$ and  
\begin{align*}
L = \Delta_{\bg} - \frac{1}{6}\overline{R} = \Delta_{\bg} - 2. 
\end{align*}
Fix a point $0 \in M$ and let $G$ denote the Green's function for $L$ with pole at $0$.  We normalize $G$ so that near $0$,
\begin{align} \label{Gnorm}
G(x) = \dfrac{1}{r(x)^2} + O(1), 
\end{align}
where $r(x)$ is the distance to $0$.  Since $G > 0$ we can define the metric $g = G^2 \bg$, which is complete and scalar-flat metric on $\widehat{M} = M \setminus \{ 0 \}$.  In fact, $(\widehat{M}, g)$ is asymptotically flat of order two (see \cite{LP}, Theorem 6.5): i.e., there is a compact set $K \subset \widehat{M}$ and coordinates $\{ x^i \}$ on $\widehat{M} \setminus K$ such that 
\begin{align*}  
g_{ij} = \delta_{ij} + O(|x|^{-2}), \ \ \partial_k g_{ij} = O(|x|^{-3}), \ \ \partial_k \partial_{\ell} g_{ij} = O(|x|^{-4}). 
\end{align*}
It follows that the {\em ADM-mass} $m_{ADM}$ of $g$ is well defined (\cite{Bartnik}): 
\begin{align} \label{mADM} 
m_{ADM}(g) = \lim_{\rho \to \infty} \frac{1}{\omega_3} \oint_{|x| = \rho} \sum_{i,j} \left( \partial_i g_{ij} - \partial_j g_{ii} \right) \iota_{\partial_j} dv_g.
\end{align}
Although $g$ (hence $m_{ADM}(g)$) depends on the choice of the point $0 \in M$, we will suppress this dependence.  

In part what makes the mass so elusive is that it is given by the limit of a kind of flux integral.  Our first result shows that when $\bg$ is Einstein, the mass can be expressed as an integral over $M \setminus \{ 0 \}$: 

\begin{theorem} \label{Thm1}  Let $(M^4, \bg)$ be an Einstein manifold with $Ric(\bg) = 3 \bg$.  Fix a point $0 \in M^4$, and let $G$ denote the Green's function for $L = \Delta_{\bg} - 2$ with pole at $0$.   Let $g = G^2 \bg$.   Then the mass $m(g) = m_{ADM}(g)$ is given by 
\begin{align} \label{MV1}
 16 \pi^2 m(g) = 6 \left( \frac{8}{3} \pi^2 - \overline{V} \right) + 2 \int_{\widehat{M}} |\mathring{\nabla}_g^2 G|^2 \dfrac{|\nabla G|^2}{G^2} \, dv_g + \int_{\widehat{M}} |\nabla_g F|^2 \, dv_g,
\end{align}
where $\mathring{\nabla}^2_g$ denotes the trace-free Hessian of $G$ (with respect to $g$), $\overline{V} = Vol(\bg)$, and 
\begin{align*}
F = \dfrac{|\nabla_g G|^2 + 1 }{G} - 4. 
\end{align*}
\end{theorem}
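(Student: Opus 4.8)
The plan is to convert the ADM flux integral into a bulk integral by integrating a divergence identity, and to identify the bulk integrand using the Einstein equation for $\bg$ together with the conformal transformation law relating $g = G^2\bg$ to $\bg$. First I would recall that since $g$ is asymptotically flat of order two, the mass integrand $\sum_{i,j}(\partial_i g_{ij} - \partial_j g_{ii})$ can be replaced, up to terms that vanish in the limit, by an expression built from the Ricci or Einstein tensor of $g$; the standard move (as in Schoen--Yau or Bartnik) is to write the flux integrand as a divergence of a quantity involving $\mathrm{Ric}(g)$ plus lower-order decaying terms, so that $16\pi^2 m(g)$ equals a boundary term at the pole $0$ plus $\int_{\widehat M}$ of a curvature quantity. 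Since $g$ is scalar-flat, only the trace-free Ricci tensor of $g$ survives. The natural variable is $G$ itself: the key structural fact is that $g = G^2 \bg$ is exactly the setup where $G$ satisfies $LG = 0$ away from $0$, i.e. $\Delta_{\bg} G = 2G$, and I would rewrite everything in terms of $G$, $\nabla_g G$, and the Hessian $\nabla_g^2 G$.

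The central computation is to express $\mathrm{Ric}(g)$, or rather the relevant scalar contraction appearing in the mass, in terms of $G$. Using the conformal change formula for the Ricci tensor and the Einstein condition $\mathrm{Ric}(\bg) = 3\bg$, one gets an identity of the schematic form $\mathrm{Ric}(g) = -2 G^{-1}\nabla_g^2 G + (\text{terms in } G^{-2}\nabla G\otimes\nabla G, \ g)$, and scalar-flatness pins down the trace. I would then look for a function $F$ on $\widehat M$ whose Laplacian (with respect to $g$) reproduces the divergence structure, and the stated $F = (|\nabla_g G|^2 + 1)/G - 4$ is clearly the intended choice: computing $\Delta_g F$ via the Bochner formula applied to $G$ (which introduces $|\nabla_g^2 G|^2$, $\mathrm{Ric}(g)(\nabla G,\nabla G)$, and $\nabla G\cdot\nabla \Delta_g G$) and using the equation for $G$ should produce, after integration by parts, precisely $\int |\mathring\nabla_g^2 G|^2 |\nabla G|^2 G^{-2}$ together with $\int|\nabla_g F|^2$ and the mass term. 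The factor $|\nabla G|^2/G^2$ weighting the Hessian term strongly suggests that one actually works on $(\widehat M, g)$ but recognizes $|\nabla_g G|^2 G^{-2} = |\nabla_{\bg}(\log G)|^2$-type quantities, i.e. the computation may be cleaner performed back on $(M,\bg)$ with $G$ replaced by a power, and then translated.

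The volume term $6\left(\tfrac{8}{3}\pi^2 - \overline V\right)$ should emerge from the boundary contribution near the pole $0$ combined with a global integration: expanding $G = r^{-2} + A + O(r)$ near $0$, the constant $A$ in the expansion of the Green's function is (up to normalization) the mass, while integrating the Einstein/Schouten quantity over $M$ and using Gauss--Bonnet or simply $\int_M \overline{R}\, dv_{\bg} = 12\,\overline V$ produces the $\overline V$ term; the number $\tfrac{8}{3}\pi^2 = Vol(g_0)$ enters through the round-sphere normalization of the local expansion (the flat model near $0$ being the one-point blow-up). I expect the main obstacle to be bookkeeping the boundary terms at $0$ correctly: one must carefully expand $G$, $\nabla_g G$, $\nabla_g^2 G$, $F$, and $dv_g$ in the conformal normal coordinates near the pole, track which terms contribute in the limit as the small sphere shrinks, and verify that the $O(1)$ term in $G$'s expansion is exactly the ADM mass with the right constant — this identification of the local expansion coefficient with $m_{ADM}$, and separating it cleanly from the curvature integrals, is the delicate part. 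A secondary technical point is justifying all integrations by parts on the noncompact $\widehat M$ given only order-two asymptotic flatness, which forces one to excise both a small ball around $0$ and a large ball near infinity and control both families of boundary integrals.
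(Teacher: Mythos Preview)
Your outline has the right protagonist --- the function $F$ --- and the right overall shape (integrate a divergence identity on $M_\rho$, read off the mass from the boundary term near the pole), but it is missing the specific identity that makes the argument close. Computing $\Delta_g F$ alone (whether via Bochner or otherwise) gives $\Delta_g F = 2G^{-1}|\mathring{\nabla}^2 G|^2$, and integrating that produces $\int G^{-1}|\mathring{\nabla}^2 G|^2$, not the weighted term $\int G^{-2}|\mathring{\nabla}^2 G|^2|\nabla G|^2$ nor $\int|\nabla F|^2$. The paper's key step is to integrate $\tfrac12\Delta(F^2+8F)$: writing $F\Delta F = \bigl(G^{-1}+G^{-1}|\nabla G|^2 - 4\bigr)\Delta F$ and using $\Delta F = 2G^{-1}|\mathring{\nabla}^2 G|^2$ gives
\[
\tfrac12\Delta(F^2+8F)=G^{-1}\Delta F + 2G^{-2}|\mathring{\nabla}^2 G|^2|\nabla G|^2 + |\nabla F|^2,
\]
and the ``$+8F$'' is exactly what kills the stray $-4\Delta F$. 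The boundary flux of $F^2+8F$ at $|x|=\rho$ then yields $16\pi^2 m(g)$ via the expansion $F=-m|x|^{-2}+O(|x|^{-3})$. Your proposal does not identify this combination, and without it the bookkeeping you anticipate will not organize itself into the three stated terms.

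Your account of the volume term is also off. Neither Gauss--Bonnet nor $\int_M \overline R\,dv_{\bg}=12\,\overline V$ is used. The remaining bulk term $\int G^{-1}\Delta F$ is integrated by parts back to $-2\int_M FG\,dv_{\bg}$, and then one computes directly from $\Delta_{\bg}G=2G$ that $\int_M G^{-2}|\overline\nabla G|_{\bg}^2\,dv_{\bg}=2\overline V$ and $\int_M G\,dv_{\bg}=2\pi^2$ (the latter from the Green's function normalization at the pole); plugging into $-\int FG\,dv_{\bg} = -\overline V - \int G^{-2}|\overline\nabla G|^2 + 4\int G$ gives $3(\tfrac{8}{3}\pi^2-\overline V)$. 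So the constant $\tfrac{8}{3}\pi^2$ comes from the Green's function integral, not from any topological formula. Finally, the paper's computation of $\Delta F$ is cleaner than Bochner: one first proves $\nabla_j F = -2P_{jk}\nabla_k G$ from $P=-G^{-1}\mathring{\nabla}^2 G$, and then the divergence-freeness of $P$ (scalar-flatness plus Bianchi) immediately gives $\Delta F = -2P_{jk}\nabla_j\nabla_k G = 2G^{-1}|\mathring{\nabla}^2 G|^2$.
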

Although there may be formulas analogous to \eqref{MV1} in dimensions greater than four, the expansion of the Green's function in Riemannian normal coordinates (see Section \ref{SecG}), as well as in the form of the function $F$ above, seem somewhat special to four dimensions.

In \cite{HL}, Hein-LeBrun gave an explicit formula for the mass of K\"ahler ALE manifolds in terms of topological invariants.  In Theorem \ref{Thm3} below we use (\ref{MV1}) to 
give a lower bound for the mass in terms of topological data, but due to the nature of our estimates we only expect equality in the case of the round metric.  

By (\ref{Bishop}) each term on the right-hand side of (\ref{MV1}) is non-negative, and in case $m_{ADM}(g) = 0$ then we have equality in (\ref{Bishop}) and $(M^4,\bg)$ is isometric to the round sphere.   Of course, by the Positive Mass Theorem (\cite{SY1}, \cite{SY2}, \cite{Witten}), $m_{ADM}(g) \geq 0$ with equality if and only if $(\widehat{M}, g)$ is isometric to $\mathbb{R}^4$ with the flat metric.  In our case, since $g$ arises from the Green's function, equality holds if and only if $(M^4,\bg)$ is isometric to the round metric.  Using the identity (\ref{MV1}), we can prove a lower bound for the mass in terms of the volume gap in (\ref{Bishop}): 

\begin{theorem} \label{Thm2}  Under the assumptions of Theorem \ref{Thm1}, the mass satisfies 
\begin{align} \label{massgap}
8 \pi^2 m(g) \, \overline{V} \geq 12 \left( \frac{8}{3} \pi^2 - \overline{V} \right) \left[ \frac{2}{3} \pi^2 + \left( \frac{8}{3} \pi^2 - \overline{V} \right) \right]. 
\end{align}
\end{theorem}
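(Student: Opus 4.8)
The plan is to combine the mass identity \eqref{MV1} from Theorem \ref{Thm1} with a lower bound for the integral term $\int_{\widehat M} |\mathring\nabla^2_g G|^2\,\frac{|\nabla G|^2}{G^2}\,dv_g$ that captures the volume deficit quantitatively. Since the term $\int_{\widehat M}|\nabla_g F|^2\,dv_g$ is manifestly non-negative, it suffices to prove
\begin{align*}
2\int_{\widehat M} |\mathring\nabla^2_g G|^2\,\frac{|\nabla G|^2}{G^2}\,dv_g \;\geq\; \frac{3}{\overline V}\left(\tfrac{8}{3}\pi^2-\overline V\right)^2 + \text{(something that combines with the first term of \eqref{MV1})},
\end{align*}
so that multiplying through by $\overline V$ and adding the $6(\tfrac83\pi^2-\overline V)$ term reproduces \eqref{massgap}. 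The natural strategy is to rewrite both integrals back on the compact manifold $(M^4,\bg)$ using the conformal change $g=G^2\bg$, where $G$ solves $\Delta_{\bg}G=2G$ away from $0$. The trace-free Hessian term, being conformally weighted, should transform into an expression involving $\mathring\nabla^2_{\bg}G$ (or the trace-free Ricci, after commuting derivatives via the Bochner/Weitzenböck formula and using $Ric(\bg)=3\bg$), integrated against a power of $G$; the Green's function normalization \eqref{Gnorm} controls the behaviour near $0$ so the integrals converge.

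The key steps, in order, would be: (i) express $\int_{\widehat M}|\mathring\nabla^2_g G|^2\frac{|\nabla G|^2}{G^2}\,dv_g$ as an integral over $(M,\bg)$, using the formula for how the Hessian and volume form change under $g=G^2\bg$; the factor $\frac{|\nabla_g G|^2}{G^2}\,dv_g$ should simplify nicely because $dv_g=G^4\,dv_{\bg}$ and $|\nabla_g G|^2=G^{-2}|\nabla_{\bg}G|^2$. (ii) Apply a Cauchy–Schwarz / trace inequality of the form $|\mathring\nabla^2 G|^2\geq \frac{4}{3}$(something)$^2$ — in four dimensions the trace-free part of a symmetric $2$-tensor on $T^*X$ relates its norm to any one eigenvalue direction with constant $4/3$, and the relevant "one direction" here is the radial/gradient direction $\nabla G/|\nabla G|$, for which $\nabla^2 G(\nabla G,\nabla G)$ is computable from the equation $\Delta G = 2G$ together with $|\nabla G|^2$. (iii) Integrate the resulting pointwise lower bound, and use integration by parts together with the Green's function equation to turn it into a bound involving $\int_M G\,dv_{\bg}$ or $\int_M G^2 dv_{\bg}$; the asymptotic normalization near the pole contributes the universal constants $\tfrac83\pi^2$ and $\tfrac23\pi^2$. (iv) Relate $\int_M$(the integrated quantity) to $\overline V = Vol(\bg)$, presumably again via integration by parts against the equation $\Delta_{\bg}G=2G$ and Green's identity, which should produce a term proportional to $\overline V$ and a term proportional to the value of a regularized integral that, by \eqref{Bishop}, is pinched. (v) Assemble the pieces and multiply by $\overline V$ to reach \eqref{massgap}.

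I expect the main obstacle to be step (ii)–(iii): identifying the \emph{exact} pointwise inequality for $|\mathring\nabla^2_g G|^2$ that, after integration, yields precisely the quadratic expression $(\tfrac83\pi^2-\overline V)[\tfrac23\pi^2+(\tfrac83\pi^2-\overline V)]$ with the right numerical constants. This requires knowing not just that $|\mathring\nabla^2 G|^2\geq c\,(\text{radial component})^2$ but controlling the radial component $\mathring\nabla^2 G(\tfrac{\nabla G}{|\nabla G|},\tfrac{\nabla G}{|\nabla G|})$ in terms of $|\nabla G|^2$, $G$, and the Laplacian — which is where the conformal flatness of $g$ at infinity and the precise form of $F$ (through $|\nabla_g G|^2 + 1 = G(F+4)$) must enter to close the estimate. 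A secondary difficulty is justifying all integrations by parts on the non-compact $\widehat M$: one must check the boundary terms at infinity vanish (using the asymptotically flat decay $g_{ij}=\delta_{ij}+O(|x|^{-2})$ and the corresponding decay of $G\sim |x|^2$ in the original coordinates) and near the pole $0$ (using \eqref{Gnorm}), and that the appearance of $m(g)$ as a boundary term at infinity is consistent with Theorem \ref{Thm1} rather than producing an additional contribution.
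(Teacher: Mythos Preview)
Your proposal has a genuine gap: you plan to drop the term $\int_{\widehat M}|\nabla_g F|^2\,dv_g$ from \eqref{MV1} and bound only the Hessian integral from below, but this throws away exactly the information needed to reach \eqref{massgap}. The paper's proof keeps \emph{both} terms and links them via the identity $\nabla_j F=-2P_{jk}\nabla_k G$ (Lemma~\ref{dFLemma}) together with the trace-free inequality $A^2(\xi,\xi)\le\tfrac{n-1}{n}|A|^2|\xi|^2$; this yields the pointwise bound $|\nabla F|^2\le 3\,G^{-2}|\mathring\nabla^2 G|^2|\nabla G|^2$, so the two integrals in \eqref{MV1} combine to at least $\tfrac{5}{3}\int|\nabla F|^2$. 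If you drop $\int|\nabla F|^2$ first, the best this route gives is $\int|\mathring\nabla^2 G|^2|\nabla G|^2/G^2\ge\tfrac{1}{3}\int|\nabla F|^2$, and after the subsequent estimate you fall short of \eqref{massgap} by a factor.

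The second missing idea is the mechanism that actually produces the quadratic $(\tfrac83\pi^2-\overline V)^2$ and the division by $\overline V$: it is a Cauchy--Schwarz inequality applied to the cross term
\[
I_\rho=\int_{M_\rho}\langle\nabla F,\nabla G\rangle\,G^{-2}\,dv_g,
\]
computed two ways. Integration by parts (using $\Delta G^{-1}=-2G^{-3}$ and Lemma~\ref{FGLemma}) gives the exact value $\lim_{\rho\to\infty}I_\rho=6(\tfrac83\pi^2-\overline V)$, while Cauchy--Schwarz bounds $I_\rho$ above by $(\int|\nabla F|^2)^{1/2}(2\overline V)^{1/2}$, the second factor coming from $\int G^{-2}|\nabla_{\bg}G|^2\,dv_{\bg}=2\overline V$ (Claim~\ref{IntClaim}). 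Squaring and feeding in the upper bound $\int|\nabla F|^2\le\tfrac{48}{5}\pi^2 m(g)-\tfrac{18}{5}(\tfrac83\pi^2-\overline V)$ from the previous paragraph is what closes the estimate. Your steps (ii)--(iv), by contrast, never introduce $I_\rho$ or the Cauchy--Schwarz step, and your ``radial component'' inequality with constant $4/3$ is not the one that appears; the operative inequality involves $A^2(\xi,\xi)$, not $A(\xi,\xi)$, and the relevant constant is $\tfrac{n-1}{n}=\tfrac34$.
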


As we mentioned above, this lower bound for the mass allows for volume degeneration.  In particular, we have 

\begin{corollary} \label{VolCor}  Let $M^4$ be a smooth, closed four-manifold, and $\{ \bg_i \}$ a sequence of Einstein metrics on $M^4$ with $Ric(\bg_i) = 3 \bg_i$. For each $i$, let $g_i = G_i^2 \bg_i$ be the corresponding AF metric defined by the Green's function $G_i$ for some choice of point $p_i \in M^4$, and $m(g_i)$ the ADM mass of $g_i$.  If $Vol(g_i) \to 0$, then $m(g_i) \to \infty$. 
\end{corollary}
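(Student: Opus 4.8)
The plan is to obtain the corollary as an immediate consequence of the mass--volume inequality of Theorem~\ref{Thm2}; no new geometric input is required. Write $\overline{V}_i = Vol(\bg_i)$ for the volume of the $i$-th Einstein metric, the quantity controlled by the hypothesis. First I would record two elementary facts about the right-hand side of \eqref{massgap}: by the Bishop bound \eqref{Bishop} we always have $0 < \overline{V}_i \leq \tfrac{8}{3}\pi^2$, so the ``volume gap'' $\tfrac{8}{3}\pi^2 - \overline{V}_i$ is non-negative, and the bracketed factor $\tfrac{2}{3}\pi^2 + \big(\tfrac{8}{3}\pi^2 - \overline{V}_i\big)$ is bounded below by $\tfrac{2}{3}\pi^2 > 0$. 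Feeding these into \eqref{massgap} yields the weaker but sufficient estimate
\[
8\pi^2\, m(g_i)\,\overline{V}_i \;\geq\; 8\pi^2\Big(\tfrac{8}{3}\pi^2 - \overline{V}_i\Big), \qquad\text{i.e.}\qquad m(g_i)\,\overline{V}_i \;\geq\; \tfrac{8}{3}\pi^2 - \overline{V}_i .
\]

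Next I would pass to the limit. If $\overline{V}_i \to 0$, then $\tfrac{8}{3}\pi^2 - \overline{V}_i \to \tfrac{8}{3}\pi^2$, so there is an index $i_0$ with $\tfrac{8}{3}\pi^2 - \overline{V}_i \geq \tfrac{4}{3}\pi^2$ for all $i \geq i_0$. For such $i$ the inequality above gives $m(g_i)\,\overline{V}_i \geq \tfrac{4}{3}\pi^2$, hence
\[
m(g_i) \;\geq\; \frac{4\pi^2}{3\,\overline{V}_i} \longrightarrow \infty \quad\text{as } i \to \infty,
\]
since $\overline{V}_i \to 0$. This is the assertion.

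I do not expect any genuine obstacle here: the entire content sits in Theorem~\ref{Thm2}, and the corollary is merely the structural observation that the right-hand side of \eqref{massgap} stays uniformly bounded away from zero as the Einstein volume degenerates, so the factor $\overline{V}_i$ multiplying the mass on the left must be offset by $m(g_i) \to \infty$. If one instead retains the full bracketed term in \eqref{massgap}, the lower bound for $m(g_i)\,\overline{V}_i$ improves to approximately $\tfrac{40}{3}\pi^2$ in the limit $\overline{V}_i \to 0$, but this sharper constant is not needed for the stated conclusion.
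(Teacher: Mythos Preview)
Your argument is correct and is exactly the intended one: the paper presents this corollary as an immediate consequence of Theorem~\ref{Thm2}, and your derivation from \eqref{massgap} makes that explicit. Your reading of the hypothesis as $\overline{V}_i = Vol(\bg_i) \to 0$ (rather than the literal $Vol(g_i)$, which is infinite for the AF metric) is the correct interpretation.
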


If $M^4 = S^4$, then it is possible to say more.  Using compactness theory for Einstein metrics and a topological argument, Akutagawa-Endo-Seshadri showed that any sequence $\{ \bg_i \}$ of Einstein metrics on $S^4$ with $Ric(\bg_i) = 3 \bg_i$ and 
volume bounded below has a sequence that converges (mod diffeomorphisms) in $C^{\infty}$ to a smooth Einstein metric (see Proposition 1.4 and the arguments in Section 4 in \cite{AES}).  Combining this result with Corollary \ref{VolCor}, we have 

\begin{corollary}  \label{mhCor}  Let $\{ \bg_i \}$ be a sequence of Einstein metrics on $S^4$ with $Ric(\bg_i) = 3 \bg_i$. For each $i$, let $g_i = G_i^2 \bg_i$ be the corresponding AF metric defined by the Green's function $G_i$ for 
some choice of point $p_i \in M^4$, and $m(g_i)$ the ADM mass of $g_i$.  If there is a constant $\mathfrak{M}$ such that $m(g_i) \leq \mathfrak{M}$ for all $i$, then there is a subsequence $\{ \bg_{i_k} \}$, diffeomorphisms $\varphi_k : S^4 \rightarrow S^4$, and 
a smooth Einstein metric $\bg$ on $S^4$ such that $\{ \varphi_k^{\ast}\bg_{i_k} \}$ converges to $\bg$ in the $C^{\infty}$-topology. 
\end{corollary}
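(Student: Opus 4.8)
The plan is to deduce the corollary by combining the mass gap estimate of Theorem~\ref{Thm2} -- in the packaged form of Corollary~\ref{VolCor} -- with the compactness theorem for Einstein four-spheres of Akutagawa--Endo--Seshadri.  The only new point to verify is that the uniform mass bound $m(g_i) \le \mathfrak{M}$ forces a \emph{uniform positive lower bound} on the volumes $Vol(\bg_i)$; granting this, the desired convergence is precisely the conclusion of the cited compactness result.

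\textbf{Step 1: uniform two-sided volume bounds.}  By the Bishop comparison \eqref{Bishop} we have $Vol(\bg_i) \le \frac{8}{3}\pi^2$ for every $i$, so the volumes are uniformly bounded above.  I claim there is $v_0 = v_0(\mathfrak{M}) > 0$ with $Vol(\bg_i) \ge v_0$ for all $i$.  Suppose not; then after passing to a subsequence we have $Vol(\bg_i) \to 0$.  Applying Corollary~\ref{VolCor} to this subsequence gives $m(g_i) \to \infty$, contradicting $m(g_i) \le \mathfrak{M}$.  (Equivalently, one may argue directly from \eqref{massgap}: with $\overline{V}_i = Vol(\bg_i)$ the estimate reads $8\pi^2 m(g_i)\,\overline{V}_i \ge 12\big(\frac{8}{3}\pi^2 - \overline{V}_i\big)\big[\frac{2}{3}\pi^2 + \big(\frac{8}{3}\pi^2 - \overline{V}_i\big)\big]$, whose right-hand side tends to the positive constant $\frac{320}{3}\pi^4$ as $\overline{V}_i \to 0$, while the bound $m(g_i) \le \mathfrak{M}$ forces the left-hand side to $0$.)  Hence $0 < v_0 \le Vol(\bg_i) \le \frac{8}{3}\pi^2$ for all $i$.

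\textbf{Step 2: apply Einstein compactness on $S^4$.}  With the volumes now bounded uniformly away from $0$, I would invoke the result of Akutagawa--Endo--Seshadri (Proposition~1.4 and the arguments of Section~4 in \cite{AES}): any sequence of Einstein metrics on $S^4$ with $Ric = 3\bg$ whose volumes are bounded below admits a subsequence converging, modulo diffeomorphisms, in the $C^\infty$-topology to a smooth Einstein metric on $S^4$.  Applying this to $\{\bg_i\}$ produces a subsequence $\{\bg_{i_k}\}$, diffeomorphisms $\varphi_k : S^4 \to S^4$, and a smooth Einstein metric $\bg$ on $S^4$ with $\varphi_k^{\ast}\bg_{i_k} \to \bg$ in $C^\infty$, which is the assertion.

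All the substantive content here is imported: Corollary~\ref{VolCor} (itself a consequence of Theorem~\ref{Thm2}) supplies the volume lower bound, and the compactness/topological argument of \cite{AES} -- which rules out orbifold degenerations in a blow-up limit on $S^4$ -- supplies the convergence.  The one genuinely self-contained step is the contradiction argument of Step~1, and its only delicate feature is ensuring the lower volume bound is uniform in $i$ rather than merely subsequential; this is exactly what the contradiction argument delivers, so I anticipate no real obstacle.
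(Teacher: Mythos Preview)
Your proposal is correct and follows precisely the argument the paper indicates: the paper derives Corollary~\ref{mhCor} by combining Corollary~\ref{VolCor} with the Akutagawa--Endo--Seshadri compactness result, and your Step~1/Step~2 breakdown is exactly this, with the contradiction argument spelled out in slightly more detail than the paper provides.
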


As indicated above, the volume estimate of \cite{Gur} combined with Theorem \ref{Thm1} implies the following inequality for the mass in terms of topological data:

\begin{theorem} \label{Thm3}  Under the assumptions of Theorem \ref{Thm1}, one of the following holds:  \vskip.1in

\noindent $(i)$  $(M^4,\bg)$ is homothetically isometric to $(S^4,g_0)$ or $(-\mathbb{CP}^2, g_{FS})$; or \vskip.1in

\noindent $(ii)$ If 
\begin{align*}
h(M) = \chi(M) + \frac{3}{2} \tau(M), 
\end{align*}
then the mass of $g = G^2\bg$ satisfies
\begin{align} \label{mt}
m(g) \geq \left[ 1 - \frac{1}{6} h(M) \right] + \frac{30}{h(M)} \left[ 1 - \frac{1}{6} h(M) \right]^2,
\end{align}
where $\chi(M)$ is the Euler characteristic and $\tau(M)$ the signature of $M$.  
\end{theorem}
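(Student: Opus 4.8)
The strategy is to combine the mass identity \eqref{MV1} from Theorem \ref{Thm1} with the sharp volume estimate of \cite{Gur} and the Chern--Gauss--Bonnet formula, exactly as Theorem \ref{Thm2} combined \eqref{MV1} with \eqref{Bishop}. First I would recall that for a four-dimensional Einstein metric $\bg$ with $Ric(\bg) = 3\bg$ the Chern--Gauss--Bonnet formula reduces to $\chi(M) = \frac{1}{8\pi^2}\int_M \big(\tfrac{1}{24}\overline R^2 + |W|^2\big)\,dv_{\bg}$ and the signature formula reads $\tau(M) = \frac{1}{12\pi^2}\int_M \big(|W^+|^2 - |W^-|^2\big)\,dv_{\bg}$; with $\overline R = 12$ one gets $\chi(M) = 6\,\overline V/(8\pi^2/3)\cdot(\text{const}) + (\text{Weyl terms})$, so that $h(M) = \chi(M) + \tfrac32\tau(M)$ can be written as $\tfrac{6}{8\pi^2}\overline V$ plus a nonnegative multiple of $\int_M |W^+|^2\,dv_{\bg}$ (this is the standard rewriting that makes $h(M)\ge 1$ with equality only on $S^4$, $\mathbb{CP}^2$). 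In particular $\tfrac16 h(M) \geq \overline V/(8\pi^2/3)$, i.e. $\tfrac83\pi^2 - \overline V \geq \tfrac83\pi^2\big(1 - \tfrac16 h(M)\big)$, which converts the volume gap appearing on the right of \eqref{MV1} and \eqref{massgap} into the topological quantity $1 - \tfrac16 h(M)$.

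Next I would invoke the dichotomy from \cite{Gur} (and \cite{AES}): either $(M^4,\bg)$ is homothetic to $(S^4,g_0)$ or to $(-\mathbb{CP}^2,g_{FS})$ --- this is case $(i)$ --- or else the strict volume inequality \eqref{GVC}, $\overline V < \tfrac13 Vol(g_0) = \tfrac89\pi^2$, holds. Working in case $(ii)$, I would feed the bound $\tfrac83\pi^2 - \overline V \geq \tfrac83\pi^2\big(1 - \tfrac16 h(M)\big)$ into the right-hand side of \eqref{massgap}. Since $t \mapsto t\big[\tfrac23\pi^2 + t\big]$ is increasing for $t \geq 0$, and $\tfrac83\pi^2 - \overline V \geq 0$, we may replace the factor $\big(\tfrac83\pi^2 - \overline V\big)$ inside the bracket and the outer factor by $\tfrac83\pi^2\big(1-\tfrac16 h(M)\big)$; on the left-hand side we bound $\overline V \leq \tfrac83\pi^2$ by Bishop \eqref{Bishop}. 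This yields
\begin{align*}
8\pi^2\,m(g)\,\tfrac83\pi^2 \;\geq\; 12\cdot \tfrac83\pi^2\Big(1-\tfrac16 h(M)\Big)\left[\tfrac23\pi^2 + \tfrac83\pi^2\Big(1-\tfrac16 h(M)\Big)\right],
\end{align*}
and dividing through by $\tfrac83\pi^2\cdot 8\pi^2/1$ and simplifying the numerical constants ($12\cdot\tfrac23\pi^2 / (8\pi^2) = 1$ and $12\cdot\tfrac83\pi^2/(8\pi^2) = 4$, so one must also track the extra factor of $\tfrac83\pi^2$ from the first bracket term against $8\pi^2$) should produce precisely $m(g) \geq \big[1 - \tfrac16 h(M)\big] + \tfrac{30}{h(M)}\big[1-\tfrac16 h(M)\big]^2$ after using $h(M)$ itself (via the relation $\overline V = \tfrac{8\pi^2}{6}h(M) - (\text{Weyl})$, so $\tfrac83\pi^2 \leq \tfrac{8\pi^2}{6}h(M)$, replacing the leftover $\overline V$ or the $\tfrac83\pi^2$ factor by $\tfrac{4\pi^2}{3}h(M)$) to generate the $\tfrac{30}{h(M)}$ coefficient.

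The main obstacle --- and the step requiring genuine care rather than bookkeeping --- is matching the numerical constants: one has two competing upper bounds for the volume (Bishop's $\tfrac83\pi^2$ and the topological $\tfrac{4\pi^2}{3}h(M)$, equal only when there is no Weyl curvature) and the estimate \eqref{massgap} is quadratic in the gap, so the choice of which factor to bound by which quantity determines whether the clean form \eqref{mt} emerges. I expect the correct route is: bound the \emph{outer} gap factor and the $\tfrac23\pi^2$-bracket using $\tfrac83\pi^2 - \overline V \geq \tfrac{4\pi^2}{3}h(M)\big(1 - \tfrac16 h(M)\big)$ together with $\tfrac83\pi^2 - \overline V \geq \tfrac83\pi^2 - \tfrac{4\pi^2}{3}h(M)\cdot(\text{something})$... more precisely, use $\overline V = \tfrac{4\pi^2}{3}h(M) - \tfrac{3}{4\pi^2}\int_M|W^+|^2 \leq \tfrac{4\pi^2}{3}h(M)$ on the left and $\tfrac83\pi^2 - \overline V \geq \tfrac83\pi^2 - \tfrac{4\pi^2}{3}h(M) = \tfrac{4\pi^2}{3}\big(2 - h(M)\big)$ --- but this is nonnegative only if $h(M)\le 2$, so instead one keeps $\tfrac83\pi^2 - \overline V$ and bounds $\overline V$ from \emph{below} by... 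Here the interplay with \eqref{GVC} (which guarantees $\overline V < \tfrac89\pi^2$, hence a definite lower bound on the gap) is what allows case $(ii)$ to be handled uniformly. I would organize the final computation as: (a) establish $1 - \tfrac16 h(M) \geq 0$ with equality iff case $(i)$; (b) establish $\tfrac83\pi^2 - \overline V \geq \tfrac{4\pi^2}{3}h(M)\big(1-\tfrac16 h(M)\big)$ using $\overline V \leq \tfrac{4\pi^2}{3}h(M)$ is \emph{false} in general, so rather use $\tfrac83\pi^2 - \overline V = \big(\tfrac{4\pi^2}{3}h(M) - \overline V\big) + \big(\tfrac83\pi^2 - \tfrac{4\pi^2}{3}h(M)\big)$ where the first summand is the nonnegative Weyl integral and the second is $\tfrac{4\pi^2}{3}(2 - h(M))$; then in \eqref{massgap} keep one factor of $\tfrac83\pi^2 - \overline V$ intact and bound the other occurrences down to the $h(M)$-expression; (c) on the left use Bishop. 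Verifying that these substitutions reproduce the exact coefficients $1$ and $30$ in \eqref{mt}, and confirming that the only equality case is $(i)$, is where the real work lies.
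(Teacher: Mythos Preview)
Your overall plan---combine Theorem~\ref{Thm2} with the Chern--Gauss--Bonnet identity and a result from \cite{Gur}---is exactly right, but the execution has a genuine gap and never converges.

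The concrete error is in your claimed inequality $\tfrac{1}{6}h(M)\ge \overline V/(\tfrac{8}{3}\pi^2)=\tfrac{3\overline V}{8\pi^2}$. From the CGB/signature identity $2\pi^2(2\chi+3\tau)=\int|W^+|^2+3\overline V$ one only gets $h(M)\ge \tfrac{6}{8\pi^2}\overline V$, i.e.\ $\tfrac{1}{6}h(M)\ge \tfrac{\overline V}{8\pi^2}$, which is three times weaker than what you assert. The missing input is not the $S^4$ volume gap $\overline V<\tfrac{1}{3}\mathrm{Vol}(g_0)$ (which is specific to $S^4$ and does not apply to general $M$) but Theorem~A of \cite{Gur}: either $(M^4,\bg)$ is anti-self-dual---hence, by Hitchin, isometric to $(S^4,g_0)$ or $(-\mathbb{CP}^2,g_{FS})$, giving case~$(i)$---or $\int|W^+|^2\ge\tfrac{4}{3}\pi^2(2\chi+3\tau)$. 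Substituting this Weyl lower bound back into the CGB identity yields $\overline V\le\tfrac{2}{9}\pi^2(2\chi+3\tau)$, which is precisely the factor-three improvement you need.

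Once you have the correct bound, your attempt to feed it into \eqref{massgap} piecewise is the source of all the thrashing with ``which factor to bound by which quantity.'' The paper's route avoids this entirely: set $\theta_{\bg}=\tfrac{3\overline V}{8\pi^2}$ and $\theta_M=\tfrac{1}{6}h(M)$, so the preceding paragraph gives $\theta_{\bg}\le\theta_M$ in case~$(ii)$. A direct substitution shows \eqref{massgap} is equivalent to $m(g)\ge f(\theta_{\bg})$ where $f(x)=(1-x)+5x^{-1}(1-x)^2$. Since $f$ is strictly decreasing on $(0,1)$ and $\theta_{\bg}\le\theta_M\le 1$, one gets $m(g)\ge f(\theta_{\bg})\ge f(\theta_M)$, and $f(\theta_M)$ is exactly the right-hand side of \eqref{mt}. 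No separate bounds on the individual factors of \eqref{massgap} are needed.
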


We remark that since $(M^4,\bg)$ is a positive Einstein metric the Hitchin-Thorpe inequality \cite{Hitchin} implies $h(M) > 0$.  However, the lower bound in (\ref{mt}) 
becomes negative when $6 < h(M) < 8$.  

As immediate corollaries of Theorem \ref{Thm3} we have the following gap results for the mass:

\begin{corollary} \label{S4Cor}  Let $\bg$ be a positive Einstein metric on $S^4$, normalized so that $Ric(\bg) = 3\bg$.  Let $G$ denote the Green's function as above, and $g = G^2 \bg$.  If
\begin{align} \label{mgS4}
m(g) < \frac{22}{3},
\end{align}
then $\bg$ is isometric to the round metric.
\end{corollary}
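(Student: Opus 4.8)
The plan is to obtain this as a direct specialization of Theorem \ref{Thm3} to the case $M = S^4$. The first step is to evaluate the topological quantity $h(M)$: since $\chi(S^4) = 2$ and the signature satisfies $\tau(S^4) = 0$, we get $h(S^4) = \chi(S^4) + \tfrac{3}{2}\tau(S^4) = 2$. Note that $h(S^4) = 2$ falls outside the range $6 < h(M) < 8$ in which the bound (\ref{mt}) is vacuous, so the estimate genuinely applies here.

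Next I would substitute $h(M) = 2$ into the right-hand side of (\ref{mt}). With $1 - \tfrac16 h(M) = \tfrac23$, alternative $(ii)$ of Theorem \ref{Thm3} would force
\[
m(g) \;\geq\; \frac{2}{3} + \frac{30}{2}\left(\frac{2}{3}\right)^2 \;=\; \frac{2}{3} + 15\cdot\frac{4}{9} \;=\; \frac{2}{3} + \frac{20}{3} \;=\; \frac{22}{3}.
\]
Hence if the hypothesis $m(g) < \tfrac{22}{3}$ holds, alternative $(ii)$ is impossible, and we must be in alternative $(i)$: $(S^4, \bg)$ is homothetically isometric to $(S^4, g_0)$ or to $(-\mathbb{CP}^2, g_{FS})$.

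To finish, I would rule out the second possibility, since $-\mathbb{CP}^2$ is not diffeomorphic to $S^4$ (for instance their Euler characteristics, $3$ and $2$, differ); thus $\bg$ must be homothetic to the round metric $g_0$. Because both $\bg$ and $g_0$ are normalized by $Ric = 3\,(\cdot)$, the homothety factor is forced to be $1$, so $\bg$ is isometric to $g_0$. There is no analytic difficulty in any of this — all of the substantive work is already carried out in Theorem \ref{Thm3} (hence in Theorems \ref{Thm1}, \ref{Thm2} and the volume estimate of \cite{Gur}); the only point worth double-checking is the elementary arithmetic showing that the threshold $22/3$ in (\ref{mgS4}) is exactly the value of the lower bound (\ref{mt}) at $h = 2$, which also shows it is the best constant this particular argument produces.
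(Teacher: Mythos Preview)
Your proposal is correct and follows essentially the same approach as the paper: specialize Theorem \ref{Thm3} to $M = S^4$, compute the topological invariant ($h(S^4)=2$, equivalently $\theta_M = \tfrac{1}{3}$ in the paper's notation), and verify that the lower bound in (\ref{mt}) equals $\tfrac{22}{3}$. Your additional remarks ruling out $-\mathbb{CP}^2$ and upgrading ``homothetically isometric'' to ``isometric'' via the normalization are minor elaborations the paper leaves implicit.
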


\begin{corollary} \label{CP2Cor}  Let $\bg$ be a positive Einstein metric on $-\mathbb{CP}^2$, normalized so that $Ric(\bg) = 3 \bg$.  Let $G$ denote the Green's function as above, and $g = G^2 \bg$.  If
\begin{align} \label{mgCP2}
m(g) < 12,
\end{align}
then $\bg$ is homothetically isometric to $g_{FS}$.
\end{corollary}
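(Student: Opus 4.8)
The plan is to derive Corollary \ref{CP2Cor} as a direct specialization of Theorem \ref{Thm3} to $M^4 = -\mathbb{CP}^2$, combined with a short computation of the topological quantity $h(M)$ and an elementary check that the resulting lower bound is exactly $12$.

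First I would record the characteristic numbers of $-\mathbb{CP}^2$. The Euler characteristic is unchanged by orientation reversal, so $\chi(-\mathbb{CP}^2) = \chi(\mathbb{CP}^2) = 3$, while the signature changes sign, giving $\tau(-\mathbb{CP}^2) = -\tau(\mathbb{CP}^2) = -1$. Hence
\begin{align*}
h(-\mathbb{CP}^2) = \chi(M) + \frac{3}{2}\tau(M) = 3 - \frac{3}{2} = \frac{3}{2} > 0,
\end{align*}
consistent with the Hitchin--Thorpe constraint recalled after Theorem \ref{Thm3}. (The reversed orientation is the natural choice here: it is the orientation with respect to which the self-dual Weyl tensor of $g_{FS}$ vanishes, which is precisely what makes $h(M)$ as small as $3/2$.)

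Next I would substitute $h(M) = 3/2$ into the lower bound (\ref{mt}). Since $1 - \frac{1}{6}h(M) = \frac{3}{4}$ and $\frac{30}{h(M)} = 20$, the right-hand side of (\ref{mt}) equals $\frac{3}{4} + 20 \left( \frac{3}{4} \right)^2 = \frac{3}{4} + \frac{45}{4} = 12$. Thus, under the hypotheses of Theorem \ref{Thm1}, alternative $(ii)$ of Theorem \ref{Thm3} forces $m(g) \geq 12$. (This is the exact analogue of the computation behind Corollary \ref{S4Cor}, where $h(S^4) = 2$ yields the threshold $\frac{22}{3}$.) Consequently, the hypothesis $m(g) < 12$ rules out alternative $(ii)$, so alternative $(i)$ must hold and $(M^4,\bg)$ is homothetically isometric either to $(S^4,g_0)$ or to $(-\mathbb{CP}^2,g_{FS})$. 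The first possibility cannot occur since $\chi(-\mathbb{CP}^2) = 3 \neq 2 = \chi(S^4)$, so $\bg$ is homothetically isometric to $g_{FS}$, as claimed.

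Granting Theorem \ref{Thm3}, there is no remaining analytic content; the only points needing attention are the sign of $\tau(-\mathbb{CP}^2)$ under orientation reversal, the verification that the polynomial expression in (\ref{mt}) evaluates to exactly $12$ at $h = 3/2$, and the trivial topological observation that excludes the sphere case. I do not anticipate a genuine obstacle beyond bookkeeping these conventions.
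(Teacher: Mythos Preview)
Your proposal is correct and follows the same approach as the paper: substitute the characteristic numbers $\chi = 3$, $\tau = -1$ into the lower bound of Theorem \ref{Thm3} and verify it equals $12$, forcing alternative $(i)$. The paper does this via the intermediate quantities $\theta_M = \frac{1}{12}(2\chi+3\tau) = \frac{1}{4}$ and $f(\theta_M) = 12$ from the proof of Theorem \ref{Thm3}, while you compute directly with $h(M) = \frac{3}{2}$ in (\ref{mt}); these are the same computation in different notation, and your explicit exclusion of the $S^4$ alternative via $\chi$ is a small clarification the paper leaves implicit.
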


\begin{remark}  \label{Rmk1}  If we conformally blow up $g_{FS}$ by the Green's function, then the resulting metric is known as the Burns metric, and the mass in independent of the choice of base point.  With our normalization the mass of Burns metric is $m = 1$.
\end{remark}

\medskip 

As the first of two final observations, we show that the mass can be estimated from below by the minimum of the Green's function:  

\begin{theorem} \label{MinGThm}   Under the assumptions of Theorem \ref{Thm1}, if the mass of $g = G^2 \bg$ satisfies $m(g) > 3$, then
\begin{align} \label{massmin}
m(g) \geq 4 \min G - 1. 
\end{align} 
\end{theorem}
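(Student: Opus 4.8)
The plan is to turn the mass identity \eqref{MV1} into a differential equation for $F$, run a maximum principle, and read off $\min G$ from the value of $F$ at the minimum point of $G$. Since $R_g=0$ we have $L_g=\Delta_g$, and the conformal covariance $L_g(\varphi)=G^{-3}L_{\bg}(G\varphi)$ applied to $\varphi=G$ (using $L_{\bg}G=0$ off the pole) gives the pointwise identity $G\,\Delta_g G = 2\bigl(1+|\nabla_g G|^2\bigr)$, i.e. $\Delta_g G = 2(F+4)$; in particular $F\geq -4$ and $F\to 0$ at the asymptotically flat end (implicit already in the convergence of \eqref{MV1}). Differentiating the relation $1+|\nabla_g G|^2 = G(F+4)$ twice, using the Bochner formula for $|\nabla_g G|^2$, the conformal transformation law for the Ricci tensor (here the Einstein condition $\mathrm{Ric}_{\bg}=3\bg$ enters), and $\Delta_g G=2(F+4)$, one finds that all first- and zeroth-order terms cancel and
\[
\Delta_g F \;=\; \frac{2\,|\mathring{\nabla}^2_g G|^2}{G} \;\geq\; 0 .
\]
Thus $F$ is $g$-subharmonic on $\widehat M$; since $F\to 0$ at the single end, the maximum principle forces $F\leq 0$ on all of $\widehat M$. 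Evaluating at the minimum point $p$ of $G$, where $\nabla_g G(p)=0$ and hence $F(p)=\tfrac{1}{\min G}-4$, we already get the clean intermediate estimate $\min G\geq \tfrac14$.

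Next I would recast the mass formula. Using $\Delta_g F=2|\mathring{\nabla}^2_g G|^2/G$ together with $\Delta_g(1/G)=-2/G^3$, the divergence theorem (all fluxes at infinity being controlled by the expansion of $G$ in Section \ref{SecG}), and $\tfrac{1+|\nabla_g G|^2}{G^2}=\tfrac{F+4}{G}$, one obtains $6\bigl(\tfrac83\pi^2-\overline V\bigr)=2\int_{\widehat M}|\mathring{\nabla}^2_g G|^2/G^2\,dv_g$ and $\int_{\widehat M}|\nabla_g F|^2\,dv_g=-2\int_{\widehat M}F\,|\mathring{\nabla}^2_g G|^2/G\,dv_g$, and substituting these into \eqref{MV1} the right-hand side collapses to
\[
16\pi^2\, m(g) \;=\; 8\int_{\widehat M}\frac{|\mathring{\nabla}^2_g G|^2}{G}\,dv_g .
\]
In particular $\int_{\widehat M}\Delta_g F\,dv_g = 4\pi^2 m(g)$, which is exactly the flux of $\nabla_g F$ at infinity; comparing with the asymptotics of $G$ shows $F = -\,m(g)\,|x|^{-2}+o(|x|^{-2})$ in the asymptotically flat coordinates.

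The heart of the matter is the \emph{corrected potential} $\widetilde F := F + m(g)/G$. By the previous step $\widetilde F\to 0$ at infinity \emph{faster} than $|x|^{-2}$; moreover $p$ remains a critical point of $\widetilde F$ (since $\nabla_g G(p)=0$), and $\widetilde F(p)=\tfrac{1+m(g)}{\min G}-4$, so the desired inequality $m(g)\geq 4\min G-1$ is \emph{equivalent} to $\widetilde F(p)\geq 0$. Suppose, for contradiction, that $\widetilde F(p)<0$. Because $\widetilde F\to 0$ at infinity, $\widetilde F$ attains a strictly negative minimum at some interior point $q\in\widehat M$, where $\Delta_g\widetilde F(q)\geq 0$. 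But
\[
\Delta_g\widetilde F \;=\; \frac{2|\mathring{\nabla}^2_g G|^2}{G} - \frac{2m(g)}{G^3} \;=\; \frac{2}{G^3}\Bigl(G^2\,|\mathring{\nabla}^2_g G|^2 - m(g)\Bigr),
\]
so necessarily $G(q)^2\,|\mathring{\nabla}^2_g G(q)|^2 \geq m(g)$. Hence the theorem follows once one proves the pointwise bound $G^2\,|\mathring{\nabla}^2_g G|^2 < m(g)$ on $\widehat M$ under the hypothesis $m(g)>3$.

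The main obstacle is exactly this last pointwise bound. I would attack it with a second Bochner-type differential inequality — this time for $|\mathring{\nabla}^2_g G|^2$, whose reaction term on an Einstein background is controlled after decomposing the curvature of $g$ (using $\mathrm{Ric}_{\bg}=3\bg$) into a Weyl piece and explicit lower-order terms — combined with the global identity $\int_{\widehat M}|\mathring{\nabla}^2_g G|^2/G\,dv_g = 2\pi^2 m(g)$ from the second step to globalize the estimate, running a maximum principle on $G^2|\mathring{\nabla}^2_g G|^2$ (or a suitable conformal rescaling of it vanishing at the asymptotically flat end). It is precisely in closing this maximum-principle inequality that the slack afforded by $m(g)>3$ is needed; that some lower bound on the mass is unavoidable is consistent with the inequality $m(g)\geq 4\min G-1$ being saturated by the round sphere, where $m(g)=0$ and $\min G=\tfrac14$.
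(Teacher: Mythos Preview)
Your setup is solid: the identity $\Delta_g F = 2G^{-1}|\mathring{\nabla}^2_g G|^2$, the subharmonicity of $F$, the expansion $F = -m|x|^{-2}+O(|x|^{-3})$, and the ``corrected potential'' $\widetilde F = F + m/G$ with $\Delta_g\widetilde F = 2G^{-3}\bigl(G^2|\mathring{\nabla}^2_g G|^2 - m\bigr)$ are exactly what the paper uses (with an $\epsilon$-perturbation).  The collapsed mass identity $16\pi^2 m = 8\int_{\widehat M} G^{-1}|\mathring{\nabla}^2_g G|^2\,dv_g$ is correct and a nice observation, but it plays no role in the proof of the theorem.

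The genuine gap is in the final step.  You apply the maximum principle at the \emph{minimum} of $\widetilde F$, which at $q$ yields only the \emph{lower} bound $G(q)^2|\mathring{\nabla}^2_g G(q)|^2 \ge m$.  To close, you are then forced to seek a global pointwise upper bound $G^2|\mathring{\nabla}^2_g G|^2 < m$, which you do not prove; your sketch via a ``second Bochner-type inequality'' for $|\mathring{\nabla}^2_g G|^2$ is precisely the missing idea, and there is no reason to expect it to succeed without substantial further input (it would amount to a pointwise bound on $|P_g|^2$).  The paper instead looks at the \emph{maximum} of $H_\epsilon = F + (m+\epsilon)/G$; at an interior maximum $p$ the Laplacian condition gives the \emph{upper} bound $G(p)^2|\mathring{\nabla}^2_g G(p)|^2 \le m+\epsilon$.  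The extra ingredient you are missing is the \emph{first-order} condition $\nabla H_\epsilon(p)=0$: combined with Lemma~\ref{dFLemma}, namely $\nabla F = -2P(\nabla G,\cdot)$, and the sharp trace-free inequality $P^2(\xi,\xi)\le \tfrac34|P|^2|\xi|^2$, it gives (assuming $\nabla G(p)\neq 0$) the \emph{lower} bound $G(p)^2|\mathring{\nabla}^2_g G(p)|^2 \ge (m+\epsilon)^2/3$.  For $m>3$ these two bounds are incompatible, forcing $\nabla G(p)=0$; the maximum of $H_\epsilon$ is then attained at the minimum of $G$, and positivity of that maximum (guaranteed by the $\epsilon$-term in the asymptotics) gives $(1+m+\epsilon)/\min G - 4 \ge 0$.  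In short: switch from min to max, add the $\epsilon$ so the maximum is interior, and exploit the first-order critical condition via $\nabla F = -2P(\nabla G,\cdot)$ --- that is where the hypothesis $m>3$ actually enters.
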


\medskip 

The assumption  $m(g) > 3$ in the latter theorem appears to be just technical.  If $M = S^4$ and $\bg$ is not round, then by Corollary \ref{S4Cor} the condition automatically holds.  
Similarly, if $M^4 = \mathbb{CP}^2$ and $\bg$ is not (homothetically) $g_{FS}$, then  Corollary \ref{CP2Cor} also implies that $m(g) > 3$.   

\medskip 

In \cite{CJVXY} it is conjectured that any positive Einstein metric $\bg$ on a closed $n$-manifold $M$, normalized so that $Ric(\bg) = (n-1)\bg$, has a lower bound for the diameter 
which only depends on the dimension.  Using Theorem \ref{MinGThm}, we can give a lower bound for the diameter of any positive Einstein metric $\bg$ with $m(g) > 3$ in terms of the mass: 

\begin{theorem} \label{DMassThm}   Under the assumptions of Theorem \ref{Thm1}, if the mass of $g = G^2 \bg$ satisfies $m(g) > 3$, then the diameter
of $\bg$ satisfies 
\begin{align} \label{Dmass1} 
\mbox{diam}(\bg) \geq 2 \arctan \dfrac{1}{\sqrt{ 4 \min G - 1 }}.
\end{align}
Consequently, by Theorem \ref{MinGThm},
\begin{align} \label{Dmass2}
\mbox{diam}(\bg) \geq 2 \arctan \dfrac{1}{\sqrt{m(g)}}. 
\end{align} 
\end{theorem}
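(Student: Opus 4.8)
The plan is to deduce \eqref{Dmass1} from a sharp pointwise gradient inequality for $G$, and then to obtain \eqref{Dmass2} by feeding in the bound $m(g)\ge 4\min G-1$ of Theorem \ref{MinGThm}. The gradient inequality I need is exactly the non-positivity of the function $F$ of Theorem \ref{Thm1}:
\begin{align*}
F=\frac{|\nabla_g G|^2+1}{G}-4\le 0\quad\text{on }\widehat{M},\qquad\text{equivalently}\qquad|\nabla_{\bg}G|^2\le G^2\bigl(4G-1\bigr)\quad\text{on }\widehat{M},
\end{align*}
where $\nabla_{\bg}$ is the gradient of $\bg$ and the equivalence comes from the conformal identity $|\nabla_g G|^2=G^{-2}|\nabla_{\bg}G|^2$. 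Since $G\to\infty$ at $0$ and $M$ is compact, $G$ attains its minimum at some $q\in\widehat{M}$, where $\nabla G=0$; evaluating $F\le 0$ there gives $1/\min G\le 4$, i.e.\ $\min G\ge\tfrac14$, so the right-hand sides of \eqref{Dmass1} and \eqref{Dmass2} are well defined. Proving $F\le 0$ is the substantive point and I expect it to be the main obstacle. The natural route is a Bochner computation for $P:=u^{-3}|\nabla_g u|^2+u$, where $u:=G^{-1}$ satisfies $\Delta_g u=-2u^3$ on $\widehat{M}$ (this follows from the conformal covariance of the conformal Laplacian, $L_{\bg}G=0$ on $\widehat{M}$, $\overline{R}=12$, and $R_g=0$), combined with the maximum principle on $(\widehat{M},g)$, using that $F\to 0$ at infinity along the asymptotically flat end, which follows from the normalization \eqref{Gnorm}. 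The curvature terms in the Bochner formula are handled by expressing $\mathrm{Ric}_g$ in terms of $G$ using that $\bg$ is Einstein; this is the same analysis of $F$ that underlies \eqref{MV1} and Theorem \ref{MinGThm}, and it is here that the hypothesis $m(g)>3$ is used.

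Granting $F\le 0$, I would set $z:=1-\tfrac{1}{2G}$, which is a smooth strictly increasing function of $G$; thus $z<1$ everywhere, $z\to 1$ at the pole $0$, and $z$ attains its minimum $1-\tfrac{1}{2\min G}\ge-1$ at $q$. A direct computation gives $|\nabla_{\bg}z|^2=\frac{|\nabla_{\bg}G|^2}{4G^4}$ and $1-z^2=\frac{4G-1}{4G^2}$, so $F\le 0$ reads precisely
\begin{align*}
|\nabla_{\bg}z|^2\le 1-z^2\qquad\text{on }\widehat{M}.
\end{align*}

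Now fix a minimizing $\bg$-geodesic $\gamma:[0,d]\to M$ from $q$ to $0$, parametrized by arclength, so $d=d_{\bg}(q,0)\le\mbox{diam}(\bg)$ and $\gamma$ reaches $0$ only at $t=d$. The function $\zeta(t):=\arccos\bigl(z(\gamma(t))\bigr)$ is continuous on $[0,d]$ with $\zeta(0)=\arccos\bigl(1-\tfrac{1}{2\min G}\bigr)$ and $\zeta(d)=\arccos 1=0$, and wherever $z(\gamma(t))>-1$ it is differentiable with
\begin{align*}
|\zeta'(t)|=\frac{|\langle\nabla_{\bg}z,\gamma'\rangle|}{\sqrt{1-z(\gamma(t))^2}}\le\frac{|\nabla_{\bg}z|}{\sqrt{1-z^2}}\le 1.
\end{align*}
Integrating, $\arccos\bigl(1-\tfrac{1}{2\min G}\bigr)=\zeta(0)-\zeta(d)\le\int_0^d|\zeta'(t)|\,dt\le d\le\mbox{diam}(\bg)$. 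The elementary identity $\arccos\bigl(1-\tfrac{1}{2t}\bigr)=2\arctan\tfrac{1}{\sqrt{4t-1}}$ for $t\ge\tfrac14$ — obtained by checking $\cos(2\arctan s)=\tfrac{1-s^2}{1+s^2}$ at $s=(4t-1)^{-1/2}$ — then turns this into \eqref{Dmass1}. For \eqref{Dmass2}, Theorem \ref{MinGThm} gives $4\min G-1\le m(g)$ since $m(g)>3$; as $t\mapsto 2\arctan\tfrac{1}{\sqrt t}$ is decreasing, this yields $2\arctan\tfrac{1}{\sqrt{4\min G-1}}\ge 2\arctan\tfrac{1}{\sqrt{m(g)}}$, and combining with \eqref{Dmass1} finishes the proof. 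Every step after the gradient inequality $F\le 0$ is a one-dimensional comparison along a geodesic, so that inequality is the whole content.
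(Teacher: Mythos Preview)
Your proposal is correct and follows the same strategy as the paper: reduce to the gradient estimate $F\le 0$, reparametrize so as to obtain a function that is $1$-Lipschitz for $\bg$, integrate along a minimizing $\bg$-geodesic from a minimum point of $G$ to the pole, and then invoke Theorem \ref{MinGThm} for \eqref{Dmass2}. Your substitution $z=1-\tfrac{1}{2G}$ with $\zeta=\arccos z$ is equivalent to the paper's choice $f=2\arctan\bigl(2\sqrt{G-\tfrac14}\,\bigr)$, since $\zeta=\pi-f$; your route avoids the arctangent-difference identity the paper uses and is arguably a bit cleaner.

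One correction worth noting: the hypothesis $m(g)>3$ plays no role in establishing $F\le 0$. In the paper this is Proposition \ref{AsFProp}, proved unconditionally from the subharmonicity $\Delta_g F=2G^{-1}|\mathring{\nabla}^2 G|^2\ge 0$ together with $F\to 0$ at infinity (which uses only the expansion \eqref{Fexp}, not the sign or size of $m$). The assumption $m(g)>3$ is needed solely to invoke Theorem \ref{MinGThm} in passing from \eqref{Dmass1} to \eqref{Dmass2}, exactly where you use it in your final paragraph; in particular, \eqref{Dmass1} holds without that hypothesis.
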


Again, if $\bg$ is a positive Einstein metric on $S^4$, then either $\bg$ is round (and equality holds in both (\ref{Dmass1}) and (\ref{Dmass2})), or $m(g) > 3$.  In particular, the conclusions hold for any positive 
Einstein metric on $S^4$ with our normalization. 

\medskip

\subsection{Organization of the paper}  The proof of Theorem \ref{Thm1} requires a slight refinement of the expansion of the Green's function in normal 
coordinates proved by Viaclovsky in \cite{JeffV}.  This is done in Section \ref{SecG}.  In Section \ref{Harnack} we prove a key gradient estimate for the Green's function.  In Sections \ref{SecThm1} - \ref{SecThm3} 
we give the proofs of Theorems \ref{Thm1} - \ref{Thm3} via the latter estimates, integration by parts and the Chern-Gauss-Bonnet theorem.  Finally, in Section \ref{minSec} we give the proofs of Theorems \ref{MinGThm} and \ref{DMassThm}, which are based 
on the maximum principle and a comparison between the Green's function and the distance from the set where 
it attains minimum.


\medskip 

\subsection{Acknowledgements}  This work was initiated during a visit of the first author to the Scuola Normale Superiore under the partial support of a Simons Foundation Fellowship in Mathematics, award
923208.  This author would like to thank the Scuola for its support and hospitality.  

The second author is supported by  the PRIN Project 2022AKNSE4 {\em Variational and Analytical aspects of Geometric PDE} and 
by the project {\em Geometric problems with loss of compactness} from Scuola Normale Superiore. He is also 
member of \emph{Gruppo Nazionale per l'Analisi Matematica, la Probabilità e le loro Applicazioni} (GNAMPA), as part of the Istituto Nazionale di Alta Matematica.

\medskip 

After the completion of this manuscript, we learned about recent results of Cosmin Manea which are related to some of the estimates in our paper, although with different proofs. In particular, using suitable monotonicity formulas and comparison principles, he obtained a version of  Proposition 3.3 which holds in all dimensions $n \geq 3$.

\medskip

\section{The Green's function} \label{SecG} 

As in the Introduction, $(M^4,\bg)$ is a closed, oriented, four-dimensional Einstein manifold with positive Einstein constant, normalized so that $Ric(\bg) = 3 \bg$, hence with scalar curvature $\overline{R} = 12$.  We fix a point $0 \in M^4$, and let $G$ denote the Green's function with respect to the conformal laplacian $L_{\bg} := \Delta_{\bg} - \frac{1}{6} \overline{R} = \Delta_{\bg} - 2,$ with pole at $0$.

By Proposition 2.1 of \cite{JeffV}, $G$ has an expansion in normal coordinates $\{ z^i \}$ centered at $0$ of the form
\begin{align} \label{Gexp}
G =  |z|^{-2} + A + a_k z^k + O_{1,\alpha}(|z|^{1+ \alpha})
\end{align}
for some $\alpha > 0$.  Here, we are using the convention that $f = O_{1,\alpha}(|z|^{\beta})$ means that $f \in C^{1,\alpha}$ with $f = O(|z|^{\beta})$, $|\partial f| = O(|z|^{\beta - 1})$.

Let $g = G^2 \bg$.  As we observed above, $g$ is scalar flat and asymptotically flat of order $2$.  By Proposition 2.2 of \cite{JeffV}, the mass of $g$ is related to the constant term in the expansion (\ref{Gexp}) by
\begin{align} \label{mA}
m(g) = 12 A - 1.
\end{align}

Using conformal covariance of $L$ and the fact that $L_{\bg} G = 0$ on $\widehat{M} = M \setminus \{ 0 \}$, it follows that $G$ satisfies the
following PDE with respect to $g$:
\begin{align} \label{DG}
\Delta G = 2 G^{-1} \left( 1 + |\nabla G|^2 \right),
\end{align}
where the laplacian is with respect to $g$.  Moreover, if $P$ denotes the Schouten tensor of $g$, i.e., 
\begin{align*}
P = \frac{1}{2} \left( Ric - \frac{1}{6}R g \right), 
\end{align*}
then
\begin{align} \label{PGg}
P = - \dfrac{\mathring{\nabla}^2 G}{G},
\end{align}
where $\mathring{\nabla}^2 G$ is the trace-free Hessian of $G$ (with respect to $g$).  \medskip

\noindent {\bf Note:}  From now on, we adopt the convention that quantities (curvature, covariant derivatives, etc.) computed with respect to $g$ will have no subscript, while quantities with respect to $\bg$ will be barred. 

\medskip

\subsection{Improved regularity of the Green's function}

As above, let $\{ z^i \}$ be $\bg$-normal coordinates centered at $0$.  If
\begin{align*}
x^i = \frac{z^i}{|z|^2}
\end{align*}
are inverted coordinates, then the expansion (\ref{Gexp}) with respect to these coordinates is
\begin{align} \label{Gx}
G = |x|^2 + A + \dfrac{a_k x^k}{|x|^2} + O_{1,\alpha}(|x|^{-1 - \alpha}),
\end{align}
where we now adopt the convention that $f = O_{k,\alpha}(|x|^{\beta})$ means $f \in C^{k,\alpha}_{\beta}(\widehat{M})$, the standard weighted H\"older space of 
functions (see \cite{LP}, Section 9).    

We will also need an expansion of the metric $g = G^2 \bg$.  We begin by recalling the expansion of $\bg$ in normal coordinates:
\begin{align*}
\bg_{ij} = \delta_{ij} - \frac{1}{3} \bar{R}_{i a j b}(0)z^a z^b - \frac{1}{6} \bar{\nabla}_c \bar{R}_{i a j b}(0)z^a z^b z^c + O(|z|^4).
\end{align*}
Using this and (\ref{Gx}), we can write the expansion of $g = G^2 \bg$ (cf.  (2.14) and (2.15) of \cite{JeffV}; but here we keep track of higher order errors):
\begin{align} \label{gexp} \begin{split}
g_{ij} &= \delta_{ij} - \frac{1}{3} \bar{W}_{iajb}(0)\frac{x^a x^b}{|x|^4} + \frac{1}{3} \frac{x^i x^j}{|x|^4} + (2A - \frac{1}{3})\frac{1}{|x|^2}\delta_{ij} \\
& \ \ \ \ \ \ + 2 \frac{a_k x^k}{|x|^4} \delta_{ij} - \frac{1}{6} \bar{\nabla}_c \bar{W}_{i a j b}(0)\frac{x^a x^b x^c}{|x|^6} + O_{1,\alpha}(|x|^{-3-\alpha}).
\end{split}
\end{align}
By the usual expansion of the inverse, it follows that
\begin{align} \label{ginvexp} \begin{split}
g^{ij} &= \delta_{ij} + \frac{1}{3} \bar{W}_{iajb}(0)\frac{x^a x^b}{|x|^4} - \frac{1}{3} \frac{x^i x^j}{|x|^4} - (2A - \frac{1}{3})\frac{1}{|x|^2}\delta_{ij} \\
& \ \ \ \ \ \ - 2 \frac{a_k x^k}{|x|^4} \delta_{ij} + \frac{1}{6} \bar{\nabla}_c \bar{W}_{i a j b}(0)\frac{x^a x^b x^c}{|x|^6} + O_{1,\alpha}(|x|^{-3-\alpha}).
\end{split}
\end{align}

The reason for carrying out more detailed expansions of the metric and its inverse is that in subsequent sections we will need the following refinement of (\ref{Gx}):

\begin{proposition} \label{GxProp}  In inverted normal coordinates,
\begin{align} \label{Gx2}
G = |x|^2 + A + \dfrac{a_k x^k}{|x|^2} + O_{2,\alpha}(|x|^{-1-\alpha}),
\end{align}
for some $\alpha \in (0,1)$. 
\end{proposition}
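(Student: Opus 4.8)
The proof will exploit the fact that \eqref{DG} is a \emph{semilinear} equation for $G$: its right-hand side $2G^{-1}(1+|\nabla G|^2)$ involves $G$ and its first derivatives only, so that once $G$ is controlled in $C^{1,\alpha}$ an elliptic bootstrap produces one more derivative, and a weighted version of this argument will fix the decay of the remainder ($\Delta_\delta$ below denotes the Euclidean Laplacian in the $x$-coordinates). Write $H = |x|^2 + A + a_k x^k/|x|^2$ for the explicit part of \eqref{Gx}, so that $u := G - H$ is already known from \eqref{Gx} to lie in $C^{1,\alpha}_{-1-\alpha}(\widehat{M})$. Subtracting $\Delta H$ from \eqref{DG} gives
\begin{equation*}
\Delta u = \Phi, \qquad \Phi := 2 G^{-1}\bigl( 1 + |\nabla G|^2 \bigr) - \Delta H,
\end{equation*}
and the key observation is that $\Phi$ is built only from $g$, from $G$ and $\nabla G$ (which we know modulo $C^{1,\alpha}_{-1-\alpha}$), and from $\nabla^2 H$ (which is explicit): no second derivatives of $u$ appear. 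I would then split $\Phi = \Phi_a + \Phi_b + \Phi_c$,
\begin{align*}
\Phi_a &= 2 G^{-1}\bigl( |\nabla G|^2 - |\nabla H|^2 \bigr) = 2 G^{-1} g^{ij}\, (\partial_i G + \partial_i H)\, \partial_j u, \\
\Phi_b &= 2 (G^{-1} - H^{-1})\bigl( 1 + |\nabla H|^2 \bigr) = -\, 2\, \frac{u}{G H}\, \bigl( 1 + |\nabla H|^2 \bigr), \\
\Phi_c &= 2 H^{-1}\bigl( 1 + |\nabla H|^2 \bigr) - \Delta H.
\end{align*}
The memberships $\Phi_a, \Phi_b \in C^{0,\alpha}_{-3-\alpha}(\widehat{M})$ are immediate from the weighted H\"older calculus: in $\Phi_a$ the factor $G^{-1} \in C^{1,\alpha}_{-2}$ absorbs the linear growth of $\partial_i G + \partial_i H \in C^{0,\alpha}_{1}$ while $\partial_j u \in C^{0,\alpha}_{-2-\alpha}$, and in $\Phi_b$ the factor $(GH)^{-1} \in C^{0,\alpha}_{-4}$ absorbs the quadratic growth of $1 + |\nabla H|^2 \in C^{0,\alpha}_{2}$ while $u \in C^{0,\alpha}_{-1-\alpha}$; it is here that the factor $G^{-1}$ in the nonlinearity of \eqref{DG} is essential.

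Everything thus reduces to the \emph{explicit} quantity $\Phi_c$, and showing $\Phi_c \in C^{0,\alpha}_{-3-\alpha}(\widehat{M})$ is the heart of the matter. Substituting $H$ together with the expansions \eqref{gexp} and \eqref{ginvexp} (recorded with $O_{1,\alpha}(|x|^{-3-\alpha})$ remainders for exactly this purpose), one checks that the terms of orders $|x|^0$, $|x|^{-2}$ and $|x|^{-3}$ in $2 H^{-1}(1 + |\nabla H|^2)$ and in $\Delta H$ coincide. The constant terms are both $8$; the $|x|^{-2}$ terms agree (in fact for every value of $A$); and the $|x|^{-3}$ terms, which are multiples of $a_k x^k/|x|^4$, cancel once one uses that $\bar{W}$ is trace-free (which annihilates most of the relevant contractions), that $\bar{\nabla}_i \bar{W}_{iajb} = 0$ because $\bg$ is Einstein (needed to discard the $\bar{\nabla}\bar{W}$ term upon differentiating \eqref{gexp}), and the precise numerical coefficients in \eqref{gexp} and \eqref{ginvexp}; isolating the $|x|^{-3}$ part of $\Delta H$ in particular requires computing the leading term of the first-order coefficient $\frac{1}{\sqrt{g}}\,\partial_i(\sqrt{g}\, g^{ij})$ of $\Delta$. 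This cancellation leaves $\Phi_c \in C^{0,\alpha}_{-3-\alpha}(\widehat{M})$, hence $\Phi \in C^{0,\alpha}_{-3-\alpha}(\widehat{M})$.

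With $\Delta u = \Phi \in C^{0,\alpha}_{-3-\alpha}(\widehat{M})$, with $u \in C^{1,\alpha}_{-1-\alpha}(\widehat{M})$, and with $g$ asymptotically flat of order two, I would finally conclude $u \in C^{2,\alpha}_{-1-\alpha}(\widehat{M})$ by weighted elliptic regularity on the asymptotically flat end (the weighted Schauder estimates of \cite{Bartnik}; see also \cite{LP}, Section 9; the weight $-1-\alpha \in (-2,0)$ is non-exceptional for $\Delta_\delta$ on $\mathbb{R}^4$). Concretely, one rescales the dyadic annuli $\{ R \le |x| \le 4R \}$ to unit size, notes that by \eqref{ginvexp} the coefficients of $\Delta$ then converge in $C^{0,\alpha}$ to those of $\Delta_\delta$ as $R \to \infty$, applies interior Schauder estimates with constants uniform in $R$, and rescales back. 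Since $u = G - H$, this is exactly \eqref{Gx2}, after shrinking $\alpha$ if necessary so that $\alpha \in (0,1)$. The main obstacle, I expect, is the cancellation in $\Phi_c$: it is the one place where the Einstein hypothesis (through $\bar{\nabla}_i \bar{W}_{iajb} = 0$) and the full bookkeeping of the higher-order terms in \eqref{gexp} and \eqref{ginvexp} are genuinely needed, and it is easy to lose track of the coefficients there.
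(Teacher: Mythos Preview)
Your proposal is correct and follows essentially the same route as the paper: show that $\Delta(G-H)\in C^{0,\alpha}_{-3-\alpha}$ via the explicit metric expansions, then invoke weighted Schauder regularity on the asymptotically flat end (the paper cites \cite{LP}, Theorem~9.2). The paper organizes the computation slightly differently---it expands $2G^{-1}(1+|\nabla G|^2)$ and $\Delta H$ (there denoted $G_0$) separately in two lemmas and observes that both equal $8 - 2m|x|^{-2} - 32\,a_k x^k/|x|^4 + O_{0,\alpha}(|x|^{-3-\alpha})$, rather than splitting their difference as your $\Phi_a+\Phi_b+\Phi_c$---but the content and the key cancellation are the same.
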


\begin{proof}  The improved regularity of the remainder will follow from the fact that $G$ satisfies an elliptic equation with respect to the AF metric $g$.   Recall from (\ref{DG}) that
\begin{align} \label{DG22}
\Delta G = 2 G^{-1} \left( 1 + |\nabla G|^2 \right).
\end{align}

\begin{lemma}  \label{DGxLemma}  The right-hand side of (\ref{DG22}) has an expansion of the form
\begin{align} \label{DG2x}
2 G^{-1} \left( 1 + |\nabla G|^2 \right) = 8 - 2m \frac{1}{|x|^2} - 32 \frac{a_k x^k}{|x|^4} + O_{0,\alpha}(|x|^{-3-\alpha}).
\end{align}
\end{lemma}

\begin{proof}  We first observe that by (\ref{Gx}),
\begin{align} \label{pxG}
\partial_j G = 2 x^j + \frac{a_j}{|x|^2} - 2 \frac{ a_k x^k x^j }{|x|^4} + O_{0,\alpha}(|x|^{-2-\alpha}),
\end{align}
where $\partial_j = \frac{\partial}{\partial x^j}$.  Then using the expansion of $g^{-1}$ above we have
\begin{align} \label{grad2G} \begin{split}
|\nabla & G|^2 = g^{ij} \partial_i G \partial_j G \\
&= \Big(  \delta_{ij} + \frac{1}{3} \bar{W}_{iajb}(0)\frac{x^a x^b}{|x|^4} - \frac{1}{3} \frac{x^i x^j}{|x|^4} - (2A - \frac{1}{3})\frac{1}{|x|^2}\delta_{ij} - 2 \frac{a_k x^k}{|x|^4} \delta_{ij} \\
& \ \ \ + \frac{1}{6} \bar{\nabla}_c \bar{W}_{i a j b}(0)\frac{x^a x^b x^c}{|x|^6} + O_{1,\alpha}(|x|^{-3-\alpha}) \Big) \left( 2 x^i + \frac{a_i}{|x|^2} - 2 \frac{ a_k x^k x^i }{|x|^4} + O_{0,\alpha}(|x|^{-2-\alpha}) \right) \\
& \ \ \times \left( 2 x^j + \frac{a_j}{|x|^2} - 2 \frac{ a_{\ell} x^{\ell} x^j }{|x|^4} + O_{0,\alpha}(|x|^{-2-\alpha}) \right) \\
&= \Big(  \delta_{ij} + \frac{1}{3} \bar{W}_{iajb}(0)\frac{x^a x^b}{|x|^4} - \frac{1}{3} \frac{x^i x^j}{|x|^4} - (2A - \frac{1}{3})\frac{1}{|x|^2}\delta_{ij} - 2 \frac{a_k x^k}{|x|^4} \delta_{ij} \\
& \ \ \ + \frac{1}{6} \bar{\nabla}_c \bar{W}_{i a j b}(0)\frac{x^a x^b x^c}{|x|^6} + O_{1,\alpha}(|x|^{-3-\alpha}) \Big) \Big( 4 x^i x^j + 2\frac{a_i x^j + a_j x^i}{|x|^2} - 8 \frac{ a_k x^k x^i x^j }{|x|^4} + O_{0,\alpha}(|x|^{-1-\alpha}) \Big) \\
&= 4 |x|^2 - 8A - 12 \frac{a_k x^k}{|x|^2} + O_{0,\alpha}(|x|^{-1-\alpha}).
\end{split}
\end{align}

By (\ref{Gx}),
\begin{align} \label{Gmx}
G^{-1} = |x|^{-2}  - A\frac{1}{|x|^4} - \frac{a_k x^k}{|x|^6} + O_{1,\alpha}(|x|^{-5-\alpha}).
\end{align}
Therefore, recalling (\ref{mA}), 
\begin{align*}
2 G^{-1} \left( 1 + |\nabla G|^2 \right) &= 2 \left( |x|^{-2}  - A\frac{1}{|x|^4} - \frac{a_k x^k}{|x|^6} + O_{1,\alpha}(|x|^{-5-\alpha})  \right) \\
& \ \ \ \ \times  \left(1 +  4 |x|^2 - 8A - 12 \frac{a_k x^k}{|x|^2} + O_{0,\alpha}(|x|^{-1-\alpha}) \right) \\
&= 8 - 2m \frac{1}{|x|^2} - 32 \frac{a_k x^k}{|x|^4} + O_{0,\alpha}(|x|^{-3-\alpha}).
\end{align*}
\end{proof}

Near infinity define
\begin{align} \label{Gz}
G_0 = |x|^2 + A + \dfrac{a_k x^k}{|x|^2}.
\end{align}
After multiplying by a cut-off function we may assume $G_0$ is globally defined.

\begin{lemma} \label{DGxLemma2}  Near infinity, $\Delta G_0$ has an expansion of the form
\begin{align} \label{Gzinf}
\Delta G_0 = 8 - 2m \frac{1}{|x|^2} - 32 \frac{a_k x^k}{|x|^4} + O_{0,\alpha}(|x|^{-3-\alpha}).
\end{align}
\end{lemma}

\begin{proof}  If $\Gamma_{ij}^k$ are the Christoffel symbols of $g$ with respect to inverted coordinates, then near infinity
\begin{align} \label{DG2} \begin{split}
\Delta G_0 &= g^{ij} \left( \partial_i \partial_j G_0 - \Gamma_{ij}^k \partial_k G_0 \right) \\
&= g^{ij}  \partial_i \partial_j G_0 - g^{ij} \Gamma_{ij}^k \partial_k G_0.
\end{split}
\end{align}

By the definition of $G_0$,
\begin{align*}
\partial_j G_0 &= 2 x^j + \frac{a_j}{|x|^2} - 2 \frac{a_k x^k x^j}{|x|^4}, \\
\partial_i \partial_j G_0 &= 2 \delta_{ij} - 2 \frac{(a_i x^j + a_j x^i)}{|x|^4} - 2 \frac{a_k x^k}{|x|^4}\delta_{ij} + 8 \frac{a_k x^k x^i x^j }{|x|^6}.
\end{align*}
Hence from (\ref{ginvexp}) 
\begin{align} \label{G2p}  \begin{split}
g^{ij}  \partial_i \partial_j G_0 &= \Big( \delta_{ij} + \frac{1}{3} \bar{W}_{iajb}(0)\frac{x^a x^b}{|x|^4} - \frac{1}{3} \frac{x^i x^j}{|x|^4} - (2A - \frac{1}{3})\frac{1}{|x|^2}\delta_{ij} - 2 \frac{a_k x^k}{|x|^4} \delta_{ij} \\
& \ \ + \frac{1}{6} \bar{\nabla}_c \bar{W}_{i a j b}(0)\frac{x^a x^b x^c}{|x|^6} + O_{1,\alpha}(|x|^{-3-\alpha}) \Big) \Big(  2 \delta_{ij} - 2 \frac{(a_i x^j + a_j x^i)}{|x|^4} \\
& \ \ \  - 2 \frac{a_k x^k}{|x|^4}\delta_{ij} + 8 \frac{a_k x^k x^i x^j }{|x|^6} \Big) \\
&= 8 + (2 - 16A) \frac{1}{|x|^2} - 20 \frac{a_k x^k}{|x|^4} + O_{0,\alpha}(|x|^{-3-\alpha}).
\end{split}
\end{align}

By the expansions (\ref{gexp}) and (\ref{ginvexp}) one can compute expansions for the Christoffel symbols with respect to $g$ to find
\begin{align} \label{Gamma}
g^{ij} \Gamma_{ij}^k =  4 A \frac{x^k}{|x|^4} - 2 \frac{a_k}{|x|^4} + 8 \frac{a_{\ell} x^{\ell} x^k}{|x|^6} + O_{\alpha}(|x|^{-4-\alpha}),
\end{align}
hence
\begin{align} \label{GdG} \begin{split}
- g^{ij} & \Gamma_{ij}^k \partial_k G_0  \\
&= - \left( 4 A \frac{x^k}{|x|^4} - 2 \frac{a_k}{|x|^4} + 8 \frac{a_{\ell} x^{\ell} x^k}{|x|^6} + O_{\alpha}(|x|^{-4-\alpha}) \right) \left( 2 x^k + \frac{a_k}{|x|^2} - 2 \frac{a_{m} x^m x^k}{|x|^4} \right)  \\
&= - 8A \frac{1}{|x|^2} - 12 \frac{a_k x^k}{|x|^4} + O_{\alpha}(|x|^{-3-\alpha}).
\end{split}
\end{align}
Combining (\ref{DG2}), (\ref{G2p}), and (\ref{GdG}), we get (\ref{Gzinf}).
\end{proof}

To complete the proof of Proposition \ref{GxProp}, we first observe that by (\ref{Gx}) and the definition of $G_0$ we have $G - G_0 \in C^{1,\alpha}_{-1- \alpha}$.  Also, by Lemmas \ref{DGxLemma} and \ref{DGxLemma2},
\begin{align*}
\Delta \left( G - G_0 \right) &= \Delta G - \Delta G_0 \\
&= 2 G^{-1} \left( 1 + |\nabla G|^2 \right) - \Delta G_0 \\
&= O_{\alpha}(|x|^{-3-\alpha}).
\end{align*}
By elliptic regularity in asymptotically flat spaces (see \cite{LP}, Theorem 9.2), it follows that $G - G_0 \in  C^{2,\alpha}_{-1-\alpha}$, hence (\ref{Gx2}) holds.    \end{proof} \medskip

The following corollary of Proposition \ref{GxProp} will be needed in the next section:  \medskip

\begin{corollary} \label{DGexpCor}  In inverted normal coordinates, we have
\begin{align} \label{DGalpha}
2 G^{-1} \left( 1 + |\nabla G|^2 \right) = 8 - 2m \frac{1}{|x|^2} - 32 \frac{a_k x^k}{|x|^4} + O_{1,\alpha}(|x|^{-3-\alpha}).
\end{align}
\end{corollary}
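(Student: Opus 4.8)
The plan is to upgrade the estimate of Lemma \ref{DGxLemma} by one order of differentiability, exploiting the improved regularity of $G$ now available from Proposition \ref{GxProp}. The point is that in Lemma \ref{DGxLemma} the remainder was only controlled in $C^{0,\alpha}$ because the input expansion (\ref{Gx}) of $G$ carried a $C^{1,\alpha}$ remainder, so $\partial_j G$ was only $C^{0,\alpha}$; with (\ref{Gx2}) the remainder of $G$ is $C^{2,\alpha}_{-1-\alpha}$, hence $\partial_j G$ is $C^{1,\alpha}$ and one more derivative propagates through the algebra.

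Concretely, I would redo the computation of (\ref{grad2G}) and the final product as follows. First, differentiating (\ref{Gx2}) gives
\begin{align*}
\partial_j G = 2 x^j + \frac{a_j}{|x|^2} - 2 \frac{a_k x^k x^j}{|x|^4} + O_{1,\alpha}(|x|^{-2-\alpha}),
\end{align*}
i.e. the same leading terms as (\ref{pxG}) but with a $C^{1,\alpha}$ error in place of a $C^{0,\alpha}$ one. Next, the inverse metric expansion (\ref{ginvexp}) already carries a $C^{1,\alpha}_{-3-\alpha}$ remainder, so in the product $g^{ij}\partial_i G\, \partial_j G$ every error term is a product of $C^{1,\alpha}$ quantities with appropriate weights, and one checks that each contributes at worst $O_{1,\alpha}(|x|^{-1-\alpha})$; thus
\begin{align*}
|\nabla G|^2 = 4|x|^2 - 8A - 12\frac{a_k x^k}{|x|^2} + O_{1,\alpha}(|x|^{-1-\alpha}).
\end{align*}
Similarly, differentiating (\ref{Gmx}) — or re-deriving it from (\ref{Gx2}) — yields $G^{-1} = |x|^{-2} - A|x|^{-4} - a_k x^k |x|^{-6} + O_{1,\alpha}(|x|^{-5-\alpha})$ (the original (\ref{Gmx}) already states a $C^{1,\alpha}$ remainder, so this step is essentially free). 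Finally, multiplying these two expansions exactly as in the proof of Lemma \ref{DGxLemma}, and using $m = 12A - 1$ from (\ref{mA}) to identify the coefficient of $|x|^{-2}$, gives
\begin{align*}
2 G^{-1}\left(1 + |\nabla G|^2\right) = 8 - 2m\frac{1}{|x|^2} - 32\frac{a_k x^k}{|x|^4} + O_{1,\alpha}(|x|^{-3-\alpha}),
\end{align*}
which is (\ref{DGalpha}).

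There is really no serious obstacle here: the statement is a bookkeeping refinement of Lemma \ref{DGxLemma}, and the only thing to be careful about is tracking the differentiability class of each error term through the products — in particular verifying that multiplying a $C^{1,\alpha}_{\beta}$ function by a polynomial-growth coefficient such as $x^i$ or $|x|^2$ stays within the weighted $C^{1,\alpha}$ scale with the expected weight, which is standard for the spaces $C^{k,\alpha}_\beta(\widehat M)$ of \cite{LP}. The mild care point, as in the original lemma, is that the leading $|x|^2$ growth of $|\nabla G|^2$ multiplies the $|x|^{-5-\alpha}$ error in $G^{-1}$ to produce a $|x|^{-3-\alpha}$ term (and the $|x|^{-1-\alpha}$ error in $|\nabla G|^2$ multiplies the $|x|^{-2}$ leading term of $G^{-1}$ to produce the same), so the stated weight $-3-\alpha$ is exactly what the worst cross terms give — no better.
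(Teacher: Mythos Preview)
Your proposal is correct and matches the paper's own proof essentially verbatim: the paper simply states that the argument is the same as Lemma \ref{DGxLemma}, the only change being the use of the improved regularity $G - G_0 \in C^{2,\alpha}_{-1-\alpha}$ from Proposition \ref{GxProp} to upgrade the remainder. Your additional bookkeeping of which error terms govern the final weight $-3-\alpha$ is accurate and a bit more explicit than the paper bothers to be.
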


\begin{proof} The proof is the same as the proof of Lemma \ref{DGxLemma}; the only difference is that we use $G \in  C^{2,\alpha}_{-1-\alpha}$, hence the improved regularity of the remainder.
\end{proof}

\bigskip

\section{A gradient estimate}  \label{Harnack}

Let
\begin{align} \label{Wdef}
F = \dfrac{1 + |\nabla G|^2 - 4G}{G},
\end{align}
which will play a fundamental role in our estimates. From (\ref{DG}) it follows that 
\begin{align} \label{DFG}
\Delta G = 2 F + 8.
\end{align}

\begin{proposition}  \label{Prop1} $(i)$  On $\widehat{M} = M \setminus \{ 0 \}$,
\begin{align} \label{PG}
\Delta F = 2 G^{-1} |\mathring{\nabla}^2 G|^2.
\end{align}

\smallskip

\noindent $(ii)$  In inverted normal coordinates, $F$ has an expansion of the form
\begin{align} \label{Fexp}
F = - m \frac{1}{|x|^2} - 16 \frac{a_k x^k}{|x|^4} + O_{1,\alpha}(|x|^{-3-\alpha}).
\end{align}
\end{proposition}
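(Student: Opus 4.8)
The plan is to dispose of $(ii)$ first, since it is essentially bookkeeping. By \eqref{Wdef} we have $F = \dfrac{1+|\nabla G|^2}{G} - 4$, so that $F+4$ is exactly one half of the quantity $2G^{-1}(1+|\nabla G|^2)$ whose expansion in inverted normal coordinates was established in Corollary \ref{DGexpCor}. I would simply insert \eqref{DGalpha}, divide by $2$, and subtract $4$: the leading term $\tfrac12\cdot 8 = 4$ cancels against the $-4$, leaving $F = -m|x|^{-2} - 16\,a_k x^k|x|^{-4} + O_{1,\alpha}(|x|^{-3-\alpha})$, which is \eqref{Fexp}.

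For $(i)$ the plan is to differentiate the Green's function equation twice, using the structural identities of Section \ref{SecG}. The starting observation is that \eqref{DFG} rewrites $F$ as $F = \tfrac12\Delta G - 4$, so $\Delta F = \tfrac12\,\Delta\Delta G$ and it suffices to compute the bi-Laplacian of $G$ with respect to $g$. The three ingredients I would use are: the Hessian decomposition $\nabla^2 G = \mathring{\nabla}^2 G + \tfrac14(\Delta G)\,g = -G\,P + \tfrac14(\Delta G)\,g$, which is just \eqref{PGg}; the scalar-flatness of $g$, which gives $\mathrm{tr}_g P = \tfrac16 R = 0$ and hence, by the contracted second Bianchi identity, $\mathrm{div}_g P = 0$; and the relation $Ric = 2P$, again a consequence of $R=0$.

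The computation then runs in two steps. First, take the divergence of the Hessian decomposition. On one hand, the Ricci (commutation) identity gives $\mathrm{div}_g(\nabla^2 G) = \nabla\Delta G + Ric(\nabla G,\,\cdot\,)$; on the other hand, since $\mathrm{div}_g P = 0$, one has $\mathrm{div}_g\big(-G\,P + \tfrac14(\Delta G)g\big) = -P(\nabla G,\,\cdot\,) + \tfrac14\nabla\Delta G$. Equating and substituting $Ric = 2P$ yields $\nabla\Delta G = -4\,P(\nabla G,\,\cdot\,)$. Second, take the divergence once more: using $\mathrm{div}_g P = 0$ again gives $\Delta\Delta G = -4\langle P,\nabla^2 G\rangle$, and inserting the Hessian decomposition together with $\mathrm{tr}_g P = 0$ collapses this to $\Delta\Delta G = 4G|P|^2$. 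Since $|P|^2 = G^{-2}|\mathring{\nabla}^2 G|^2$ by \eqref{PGg}, we conclude $\Delta F = \tfrac12\Delta\Delta G = 2G^{-1}|\mathring{\nabla}^2 G|^2$, which is \eqref{PG}.

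I do not anticipate a genuine obstacle, only the usual care with curvature sign conventions; the single identity most prone to a slip is $\nabla\Delta G = -4\,P(\nabla G,\,\cdot\,)$, since it bundles together the Ricci commutator, the vanishing of $\mathrm{div}_g P$, and $Ric = 2P$ at once. Should that route cause trouble, the fallback is to compute $\Delta F$ directly from $F = (1+|\nabla G|^2)/G - 4$ by a Bochner argument, expanding $\Delta|\nabla G|^2 = 2|\nabla^2 G|^2 + 2\langle\nabla G,\nabla\Delta G\rangle + 2\,Ric(\nabla G,\nabla G)$ and $\Delta(G^{-1}) = 2G^{-3}|\nabla G|^2 - G^{-2}\Delta G$, then simplifying with \eqref{DG} and \eqref{PGg}; this is more computation but lands on the same identity.
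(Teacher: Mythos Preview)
Your argument is correct in both parts. Part $(ii)$ is identical to the paper's one-line deduction from Corollary \ref{DGexpCor}. For part $(i)$, the paper also arrives at the intermediate gradient identity $\nabla_j F = -2P_{jk}\nabla_k G$ (equivalently your $\nabla\Delta G = -4P(\nabla G,\cdot)$) and then takes one divergence using $\mathrm{div}_g P = 0$, exactly as you do. The only genuine difference is how that gradient identity is obtained: the paper computes $\nabla_j F$ directly from the definition $F = G^{-1}(1+|\nabla G|^2) - 4$ and compares it term by term with $P_{jk}\nabla_k G = -G^{-1}(\mathring{\nabla}^2 G)_{jk}\nabla_k G$, using only \eqref{DG} and \eqref{PGg}; you instead pass through $F = \tfrac12\Delta G - 4$ and derive the gradient identity by equating two expressions for $\mathrm{div}_g(\nabla^2 G)$, invoking the Ricci commutator $\mathrm{div}_g(\nabla^2 G) = \nabla\Delta G + Ric(\nabla G,\cdot)$ together with $Ric = 2P$ and $\mathrm{div}_g P = 0$. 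The paper's route is slightly more elementary in that it avoids the commutator identity altogether; your route is a bit more structural and makes the role of scalar-flatness (through $Ric = 2P$ and $\mathrm{tr}_g P = 0$) more transparent. Either way the final divergence step is the same.
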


\begin{proof} $(i)$  We begin with a lemma:

\begin{lemma} \label{dFLemma}  On $\widehat{M} = M \setminus \{ 0 \}$,
\begin{align} \label{dFID}
\nabla_j F = - 2 P_{jk} \nabla_k G,
\end{align}
where $P$ is the Schouten tensor of $g$.  
\end{lemma}

\begin{proof} By the definition of $F$,
\begin{align} \label{dF1} \begin{split}
\nabla_j F &= \nabla_j \left( G^{-1} \left( 1 + |\nabla G|^2 \right) - 4 \right) \\
&= - G^{-2} \left( 1 + |\nabla G|^2 \right) \nabla_j G + G^{-1} \nabla_j |\nabla G|^2.
\end{split}
\end{align}
Also, by (\ref{PGg})
\begin{align}
\label{dF2} \begin{split}
P_{jk}\nabla_k G &= - G^{-1} \left(\mathring{\nabla}^2 G\right)_{jk}\nabla_k G \\
&= - G^{-1} \left(\nabla_j \nabla_k G - \frac{1}{4} \left( \Delta G \right) g_{jk} \right)\nabla_k G \\
&= - G^{-1} \nabla_j \nabla_k G \nabla_k G  + \frac{1}{4} G^{-1} \left( \Delta G \right) \nabla_j G \\
&= - \frac{1}{2} G^{-1} \nabla_j |\nabla G|^2 + \frac{1}{4} G^{-1} \left( \Delta G \right) \nabla_j G \\
&= - \frac{1}{2} G^{-1} \nabla_j |\nabla G|^2 + \frac{1}{2} G^{-2} \left( 1 + |\nabla G|^2 \right) \nabla_j G,
\end{split}
\end{align}
where the last line follows from (\ref{DG}).  Comparing with (\ref{dF1}), we get (\ref{dFID}).
\end{proof}

\medskip

Since $g$ is scalar-flat, the second Bianchi implies that the Schouten tensor is divergence-free.  Therefore, taking the divergence of (\ref{dFID}) and using (\ref{PGg}) we find
\begin{align*}
\Delta F &= - 2 \nabla^j \left(  P_{jk} \nabla_k G \right) \\
&= - 2  P_{jk} \nabla_j \nabla_k G  \\
&= - 2 \left( - G^{-1} \mathring{\nabla}^2 G \right)_{jk} \nabla_j \nabla_k G \\
&= 2 G^{-1} |\mathring{\nabla}^2 G|^2,
\end{align*}
which proves (\ref{PG}).
\smallskip

\noindent $(ii)$  The expansion (\ref{Fexp}) follows immediately from Corollary \ref{DGexpCor}.
\end{proof}

\medskip

\begin{proposition} \label{AsFProp}  $F \leq 0$ on $\widehat{M}$, and equality is only achieved when $g$ is the flat metric.  In particular, we have the (sharp) gradient estimate
\begin{align} \label{gradG}
|\nabla G|^2 \leq 4 G - 1.
\end{align}
\end{proposition}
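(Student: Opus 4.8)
The plan is to prove $F \leq 0$ by a maximum principle argument using the elliptic equation $\Delta F = 2 G^{-1} |\mathring{\nabla}^2 G|^2$ from Proposition \ref{Prop1}(i). Since $G > 0$ on $\widehat{M}$, this equation says that $F$ is \emph{subharmonic} with respect to $g$, i.e. $\Delta F \geq 0$. The strategy is then to apply the maximum principle on the noncompact manifold $\widehat{M}$, which requires controlling the behavior of $F$ both near the puncture $0$ (equivalently, near infinity in the AF coordinates) and — in case there is any interior concern — verifying there is no finite boundary. Near the puncture, the expansion \eqref{Fexp}, namely $F = -m |x|^{-2} - 16 a_k x^k |x|^{-4} + O_{1,\alpha}(|x|^{-3-\alpha})$, shows that $F \to 0$ at infinity in the $x$-coordinates. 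So $F$ is a subharmonic function on the complete AF manifold $(\widehat{M},g)$ that tends to $0$ at infinity.

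The key step is then to invoke the maximum principle at infinity: a subharmonic function on a complete AF manifold which vanishes at infinity must satisfy $F \leq 0$ everywhere. If $\sup F > 0$ were attained at some interior point, the strong maximum principle would force $F$ to be constant and positive, contradicting $F \to 0$; if the supremum is only attained in the limit at infinity, then it equals $0$, again giving $F \leq 0$. One clean way to make this rigorous is: for any $\varepsilon > 0$, the set $\{ F \geq \varepsilon \}$ is compact (by the decay \eqref{Fexp}), so $F$ attains its maximum over $\widehat{M}$ on this compact set if it is nonempty; at that interior maximum point $\Delta F \leq 0$, yet $\Delta F = 2 G^{-1}|\mathring{\nabla}^2 G|^2 \geq 0$, forcing $\mathring{\nabla}^2 G = 0$ there, and moreover the strong maximum principle (Hopf) makes $F$ locally constant, then globally constant by connectedness and unique continuation — contradicting the decay unless the set $\{F \geq \varepsilon\}$ is empty. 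Letting $\varepsilon \downarrow 0$ gives $F \leq 0$.

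For the equality case, if $F \equiv 0$ on some open set, then by the strong maximum principle $F \equiv 0$ on all of $\widehat{M}$, whence $\Delta F \equiv 0$ forces $G^{-1}|\mathring{\nabla}^2 G|^2 \equiv 0$, i.e. $\mathring{\nabla}^2 G \equiv 0$. By \eqref{PGg} this means the Schouten tensor $P$ of $g$ vanishes identically, so $g$ is conformally flat and Ricci-flat (scalar-flat with $P = 0$ gives $Ric = 0$), hence $g$ is flat; since $(\widehat{M},g)$ is AF and flat it is isometric to $\mathbb{R}^4$. Conversely if $g$ is flat then $G$ is (up to the coordinate change) a harmonic function of the form $|x|^2 + A + \cdots$ and one checks $F \equiv 0$ directly from $m = 0$ and $a_k = 0$. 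Finally, the gradient estimate \eqref{gradG} is merely a restatement: $F \leq 0$ means $\frac{1 + |\nabla G|^2 - 4G}{G} \leq 0$, and since $G > 0$ this is exactly $|\nabla G|^2 \leq 4G - 1$.

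The main obstacle I anticipate is the rigorous application of the maximum principle on the noncompact end: one must be sure that $F$ cannot "escape to a positive supremum at infinity" — but this is precisely ruled out by the decay in \eqref{Fexp}, so the compactness of superlevel sets $\{F \geq \varepsilon\}$ does the job cleanly. A secondary subtlety is the equality analysis: deducing flatness from $\mathring{\nabla}^2 G \equiv 0$ requires combining $P \equiv 0$ with scalar-flatness to conclude $Ric \equiv 0$, and then using conformal flatness ($P \equiv 0$ in dimension $4$ does not alone give flat, but $Ric \equiv 0$ plus vanishing Schouten does, since in dimension $n \geq 4$ one still needs the Weyl tensor — however here $g$ arises from a conformal blow-up of an Einstein metric, and being Ricci-flat and AF of order $2$, the positive mass theorem rigidity or a direct argument via $m(g) = 0$ finishes it).
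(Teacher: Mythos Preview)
Your argument is correct and follows essentially the same route as the paper: subharmonicity of $F$ from Proposition \ref{Prop1}(i), decay $F \to 0$ at infinity from the expansion \eqref{Fexp}, and the maximum principle (the paper uses the exhaustion $M_\rho$ directly rather than superlevel sets, but this is cosmetic). For the equality case the paper likewise gets $\mathring{\nabla}^2 G \equiv 0$ and hence $Ric_g = 0$ via \eqref{PGg}, then concludes flatness by citing Proposition~2 of \cite{Schoen} rather than invoking positive mass rigidity; note that your aside about ``conformally flat'' is a slip ($P \equiv 0$ does not by itself kill the Weyl tensor), but you correctly abandon it for the $m(g)=0$ rigidity route, which is equivalent to what the paper uses.
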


\begin{proof} Assume inverted coordinates are defined on $\{ |x| > \rho \}$ for $\rho > \rho_0 >> 1$.  Proposition \ref{Prop1} implies that $F$ is subharmonic on $M_{\rho} = M \setminus \{ |x| > \rho \}$, for any $\rho > \rho_0$.  Therefore, $F$ attains its maximum in $M_{\rho}$ on $\{ |x| = \rho \}$.  Using the expansion of $F$ in (\ref{Fexp}), on $\{ |x| = \rho \}$ we have
\begin{align} \label{Fr}
F = - m(g) \rho^{-2} + O(\rho^{-3}),
\end{align}
hence $F \rightarrow 0$ as $|x| \to \infty$, and it follows $F \leq 0$ on $\widehat{M}$.

If $F = 0$ at some point then by the strong maximum principle $F \equiv 0$.  By part $(i)$ of Proposition \ref{Prop1} this implies $\mathring{\nabla}^2 G \equiv 0$, and from (\ref{PGg}) we see that $g$ is Ricci-flat.  It then follows from Proposition 2 of \cite{Schoen} that $g$ is flat.
\end{proof}

\medskip

\begin{remark} \label{GroundRemark}   For the flat model on $\mathbb{R}^4$, $G$ is given by
\begin{align*}
G = |x|^2 + \frac{1}{4},
\end{align*}
and obviously $|\nabla_0 G|^2 = 4 G - 1$.
\end{remark}

\bigskip

\section{The proof of Theorem \ref{Thm1}}   \label{SecThm1}

\begin{proof}  By Proposition \ref{Prop1},
\begin{align} \label{mv1}  \begin{split}
\frac{1}{2} \Delta F^2 &=  F \Delta F + |\nabla F|^2 \\
&= \left( G^{-1} \left( 1 + |\nabla G|^2 \right) - 4 \right) \Delta F + |\nabla F|^2 \\
&= G^{-1} \Delta F + \frac{|\nabla G|^2}{G} \Delta F - 4 \Delta F + |\nabla F|^2 \\
&= G^{-1} \Delta F + 2 G^{-2} |\mathring{\nabla}^2 G|^2 |\nabla G|^2 - 4 \Delta F + |\nabla F|^2,
\end{split}
\end{align}
hence
\begin{align} \label{mv2}
\frac{1}{2} \Delta \left( F^2 + 8 F \right) = G^{-1} \Delta F + 2 G^{-2} |\mathring{\nabla}^2 G|^2 |\nabla G|^2 + |\nabla F|^2.
\end{align}
For $\rho >> 1$ sufficiently large, integrate the above over $M_{\rho} = M \setminus \{ |x| > \rho \}$ and apply the divergence theorem on the left-hand side:
\begin{align} \label{mv3} \begin{split}
\frac{1}{2} \oint_{\{ |x| = \rho \}} \frac{\partial}{\partial \nu} \left( F^2 + 8 F \right) \, dS &= \frac{1}{2} \int_{M_{\rho}} \Delta \left( F^2 + 8 F \right) dv \\
& = \int_{M_{\rho}}  G^{-1} \Delta F \, dv + 2  \int_{M_{\rho}} G^{-2} |\mathring{\nabla}^2 G|^2 |\nabla G|^2 \, dv + \int_{M_{\rho}} |\nabla F|^2 \, dv.
\end{split}
\end{align}

Using the expansion of $F$ in (\ref{Fexp}) on $\{ |x| = \rho \}$ we have
\begin{align} \label{Fr2}
F^2 + 8 F = - 8 m(g) \rho^{-2} + O_{1,\alpha}(\rho^{-3})
\end{align}
for some $\alpha \in (0,1)$, hence
\begin{align} \label{mF}
\frac{1}{2} \oint_{\{ |x| = \rho \}} & \frac{\partial}{\partial \nu} \left( F^2 + 8 F \right) \, dS = 16 \pi^2 m(g) + o(1), \ \ \rho \to \infty,
\end{align}
and consequently
\begin{align} \label{mv4}
 16 \pi^2 m(g)= \int_{M_{\rho}}  G^{-1} \Delta F \, dv + 2  \int_{M_{\rho}} G^{-2} |\mathring{\nabla}^2 G|^2 |\nabla G|^2 \, dv + \int_{M_{\rho}} |\nabla F|^2 \, dv + o(1),
\end{align}
as $\rho \to \infty$.     

For the first term on the right-hand side of (\ref{mv4}), we integrate by parts:
\begin{align} \label{mv5}  \begin{split}
\int_{M_{\rho}}  G^{-1} \Delta F \, dv &= \int_{M_{\rho}}  F \Delta \left( G^{-1}\right)  \, dv + \oint_{\{ |x| = \rho \}} \left( G^{-1} \frac{\partial}{\partial \nu}F - F \frac{\partial}{\partial \nu}\left(G^{-1} \right) \right) \, dS \\
&= \int_{M_{\rho}}  F \left( - G^{-2} \Delta G + 2 G^{-3} |\nabla G|^2 \right)  \, dv \\
& \ \ \ \  + \oint_{\{ |x| = \rho \}} \left( G^{-1} \frac{\partial}{\partial \nu}F + FG^{-2}  \frac{\partial}{\partial \nu}G \right) \, dS \\
&= - 2 \int_{M_{\rho}}  F G^{-3} \, dv + \oint_{\{ |x| = \rho \}} \left( G^{-1} \frac{\partial}{\partial \nu}F + FG^{-2}  \frac{\partial}{\partial \nu}G \right) \, dS,
\end{split}
\end{align}
and the last line follows from the previous line by (\ref{DG}).   Using the expansions for $F$ and $G$, it is easy to see that
\begin{align} \label{mv6}
\oint_{\{ |x| = \rho \}} \left( G^{-1} \frac{\partial}{\partial \nu}F + FG^{-2}  \frac{\partial}{\partial \nu}G \right) \, dS = o(1), \ \ \rho \to \infty.
\end{align}
For the interior integral in the right-hand side of \eqref{mv5}, since the volume forms are related by $dv = dv_g = G^4 dv_{\bg}$,
\begin{align} \label{mv7}
- 2 \int_{M_{\rho}}  F G^{-3} \, dv = - 2 \int_{M_{\rho}}  F G \, dv_{\bg}.
\end{align}
Also, by the expansions of $F$ and $G$, $|FG|$ is bounded, so
\begin{align} \label{mv8}
- 2 \int_{M_{\rho}}  F G^{-3} \, dv = 2 \int_{M_{\rho}}  F G \, dv_{\bg} = - 2 \int_M F G \, dv_{\bg} + o(1), \ \ \rho \to \infty.
\end{align}
Combining (\ref{mv5}), (\ref{mv6}), and (\ref{mv8}), we obtain
\begin{align} \label{mv9}
\int_{M_{\rho}}  G^{-1} \Delta F \, dv = - 2 \int_M  F G \, dv_{\bg} + o(1).
\end{align}

\medskip

\begin{lemma}  \label{FGLemma}
\begin{align} \label{FGform}
- \int F G \, dv_{\bg} = 3 \left( \frac{8}{3} \pi^2  -  \overline{V}\right),
\end{align}
hence
\begin{align} \label{mv9a}
\int_{M_{\rho}}  G^{-1} \Delta F \, dv = 6 \left( \frac{8}{3} \pi^2  -  \overline{V}\right) + o(1), \ \ \rho \to \infty.
\end{align}
\end{lemma}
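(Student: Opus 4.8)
The plan is to compute $-\int_M FG\,dv_{\bg}$ directly from the definitions, reducing everything to the conformal laplacian equation $L_{\bg}G = 0$ on $\widehat M$ and the volume of $\bg$. First I would rewrite $FG$ using \eqref{Wdef}: by definition $FG = 1 + |\nabla G|^2 - 4G$, where $\nabla$ and $|\cdot|$ are with respect to $g$. The key observation is that since $g = G^2\bg$, one has $|\nabla_g G|^2 = g^{ij}\partial_i G\partial_j G = G^{-2}\bg^{ij}\partial_i G\partial_j G = G^{-2}|\bar\nabla G|^2_{\bg}$, so $|\nabla G|^2 = G^{-2}|\bar\nabla G|_{\bg}^2$. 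Hence
\begin{align*}
-\int_M FG\,dv_{\bg} = \int_M \left( 4G - 1 - G^{-2}|\bar\nabla G|_{\bg}^2 \right) dv_{\bg}.
\end{align*}
The term $-\int_M G^{-2}|\bar\nabla G|_{\bg}^2\,dv_{\bg}$ I would handle by integrating by parts with respect to $\bg$: writing $G^{-2}|\bar\nabla G|_{\bg}^2 = -\bar\nabla^i(G^{-1})\bar\nabla_i G$, an integration by parts over $M_\rho = M\setminus\{|x|>\rho\}$ produces $\int G^{-1}\Delta_{\bg}G$ plus a boundary term on $\{|x|=\rho\}$, which vanishes as $\rho\to\infty$ once one checks decay using the expansion \eqref{Gx} (near the puncture $0$ the singularity $G\sim r^{-2}$ must be excised by a small sphere as well, and one checks the boundary term there vanishes too, or more precisely produces a harmless contribution). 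Then $\Delta_{\bg}G = L_{\bg}G + 2G = 2G$ on $\widehat M$, so this term becomes $\int_M G^{-1}\cdot 2G\,dv_{\bg} = 2\,\overline V$.

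Combining, $-\int_M FG\,dv_{\bg} = 4\int_M G\,dv_{\bg} - \overline V + 2\overline V = 4\int_M G\,dv_{\bg} + \overline V$, so it remains to evaluate $\int_M G\,dv_{\bg}$. For this I would use that $G$ is the Green's function: $L_{\bg}G = -c\,\delta_0$ for the appropriate constant $c$ fixed by the normalization \eqref{Gnorm}, i.e. $G = r^{-2} + O(1)$. Testing $L_{\bg}G = \Delta_{\bg}G - 2G$ against the constant function $1$ over $M$ (again excising a small ball around $0$) gives $\int_M(-2G)\,dv_{\bg} = -c$, so $\int_M G\,dv_{\bg} = c/2$. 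The constant $c$ is determined by integrating $\Delta_{\bg}G$ over a small sphere: with the normalization $G = r^{-2}+O(1)$ in dimension four, $\Delta_{\bg}r^{-2}$ has a distributional mass equal to $-(n-2)\omega_{n-1}$-type constant; in $n=4$ this should come out so that $c/2 = \tfrac{1}{3}\cdot\tfrac{8}{3}\pi^2 = \tfrac{8}{9}\pi^2$ — more directly, one expects $4\int_M G\,dv_{\bg} = \tfrac{8}{3}\pi^2$, matching the claimed identity $-\int FG\,dv_{\bg} = 3(\tfrac{8}{3}\pi^2 - \overline V) = 8\pi^2 - 3\overline V$, which forces $4\int_M G\,dv_{\bg} + \overline V = 8\pi^2 - 3\overline V$, i.e. $\int_M G\,dv_{\bg} = 2\pi^2 - \overline V$. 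Wait — this shows $\int_M G\,dv_{\bg}$ itself depends on $\overline V$, so the naive "test against $1$" computation must also pick up a volume term; indeed $\int_M \Delta_{\bg}G\,dv_{\bg} = 0$ is wrong because of the delta, and one gets $\int_M(\Delta_{\bg}G - 2G)\,dv_{\bg} = -c$ giving precisely $-2\int_M G\,dv_{\bg} = -c$ hence $\int_M G\,dv_{\bg} = c/2$ with $c$ a fixed dimensional constant, contradiction. The resolution: the boundary term in the first integration by parts does \emph{not} vanish — it contributes exactly the $\overline V$-dependence, or alternatively one must be more careful with the $O(1)$ term in \eqref{Gnorm}. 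I would therefore redo the integration by parts keeping all boundary contributions near $\{|x|=\rho\}$ (equivalently near $0$ in $\bg$-coordinates), using the refined expansion to extract the finite piece. The statement \eqref{mv9a} then follows by combining \eqref{FGform} with \eqref{mv9}.

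The main obstacle I expect is precisely the bookkeeping of the boundary terms in the integrations by parts: both near the pole $0$ (where $G$ blows up like $r^{-2}$) and near infinity (i.e. $\{|x|=\rho\}$), one must track which terms in the expansions \eqref{Gnorm}/\eqref{Gx} survive in the limit. A cleaner route, which I would actually pursue to avoid the sign/constant pitfalls above, is to avoid splitting off $\int_M G\,dv_{\bg}$ and instead directly integrate the identity $FG = 1 + |\nabla G|^2 - 4G$ after multiplying \eqref{DG} (i.e. $\Delta G = 2G^{-1}(1+|\nabla G|^2)$) by a suitable power of $G$: from \eqref{DG}, $G\Delta G = 2(1+|\nabla G|^2)$, and using $dv = G^4\,dv_{\bg}$ together with the conformal transformation of $\Delta$, one obtains a Bochner-type identity whose integral over $M$ telescopes against $\int_M \bar R\,dv_{\bg}$ — but $\bar R = 12$ and $\int_M dv_{\bg} = \overline V$, while the Green's-function normalization contributes the universal constant $\tfrac{8}{3}\pi^2$ (the volume of the round $S^4$, appearing because the Green's function of $L$ on the round sphere is exactly $G_0 = |x|^2 + \tfrac14$ of Remark \ref{GroundRemark}, whose associated integral is $\tfrac{8}{3}\pi^2$). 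Chasing the constants through this computation should yield \eqref{FGform} without the delicate cancellation, and I would present that version.
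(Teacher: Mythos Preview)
Your approach is exactly the one the paper uses, but a sign slip derails the computation halfway through and sends you on an unnecessary detour. You correctly obtain
\[
-\int_M FG\,dv_{\bg} \;=\; 4\int_M G\,dv_{\bg} \;-\; \overline V \;-\; \int_M G^{-2}|\bar\nabla G|_{\bg}^2\,dv_{\bg},
\]
and you correctly show $\int_M G^{-2}|\bar\nabla G|_{\bg}^2\,dv_{\bg}=2\overline V$ by integrating by parts and using $\Delta_{\bg}G=2G$ (the boundary term at the pole does vanish: $G^{-1}\partial_\mu G\sim 2r^{-1}$ integrated over a sphere of area $\sim 2\pi^2 r^3$ tends to zero). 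But when you combine, you add $+2\overline V$ instead of subtracting it. With the correct sign,
\[
-\int_M FG\,dv_{\bg} \;=\; 4\int_M G\,dv_{\bg} - 3\overline V.
\]
Your own computation of $\int_M G\,dv_{\bg}$ via testing $\Delta_{\bg}G-2G$ against~$1$ is then perfectly fine: integrating $2G=\Delta_{\bg}G$ over $M\setminus B_r$ gives
\[
2\int_M G\,dv_{\bg}=\lim_{r\to 0}\oint_{\partial B_r}\partial_\mu G\,dS_{\bg}=\lim_{r\to 0}\bigl(2r^{-3}\cdot 2\pi^2 r^3+o(1)\bigr)=4\pi^2,
\]
so $\int_M G\,dv_{\bg}=2\pi^2$, a fixed dimensional constant independent of $\overline V$, exactly as your ``naive test against~$1$'' predicted. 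Hence $-\int FG\,dv_{\bg}=8\pi^2-3\overline V=3\bigl(\tfrac{8}{3}\pi^2-\overline V\bigr)$, and \eqref{mv9a} follows from \eqref{mv9} as you say.

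Everything after ``Wait ---'' in your write-up is chasing a contradiction that does not exist; there are no hidden boundary contributions and no need for the alternative Bochner-type route you sketch at the end. The paper's proof is precisely your first two paragraphs with the sign corrected.
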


\begin{proof}

By the definition of $F$,
\begin{align} \label{FS4} \begin{split}
- \int FG\, dv_{\bg} &= - \int \left( 1 + |\nabla_g G|_g^2 - 4G \right) \, dv_{\bg} \\
&= - \int \left( 1 + G^{-2}|\nabla_{\bg} G|_{\bg}^2 - 4G \right) \, dv_{\bg} \\
&= - \overline{V} - \int G^{-2}|\nabla_{\bg} G|_{\bg}^2  \, dv_{\bg} + 4 \int G \, dv_{\bg}.
\end{split}
\end{align}

\begin{claim} \label{IntClaim} We have
\begin{align} \label{IG1}
\int G^{-2}|\nabla_{\bg} G|_{\bg}^2  \, dv_{\bg} = 2\overline{V},
\end{align}
\begin{align} \label{IG2}
\int G \, dv_{\bg} = 2 \pi^2.
\end{align}
\end{claim}

\begin{proof}  Let $B_r$ be a small geodesic ball (with respect to $\bg$) centered at $0$.  By the definition of $G$,
\begin{align} \label{GG}
\Delta_{\bg} G = 2 G
\end{align}
on $M_r = M \setminus B_r$.  Therefore, dividing by $G$ and integrating gives
\begin{align} \label{IGp} \begin{split}
\int_{M_r} 2 \, dv_{\bg} &= \int_{M_r} G^{-1} \Delta_{\bg} G \, dv_{\bg} \\
&= - \int_{M_r} \langle \nabla_{\bg} G^{-1} , \nabla_{\bg} G \rangle_{\bg} \, dv_{\bg} + \oint_{\partial B_r} G^{-1} \frac{\partial G}{\partial \mu} \, dS_{\bg} \\
&=\int_{M_r} G^{-2} |\nabla_{\bg} G|^2 \, dv_{\bg} + \oint_{\partial B_r} G^{-1} \frac{\partial G}{\partial \mu} \, dS_{\bg}, \\
\end{split}
\end{align}
where $\mu$ is the outward unit normal to $\partial B_r$ with respect to $\bg$.  By the asymptotics of $G$, it is easy to check that
\begin{align*}
\oint_{\partial B_r} G^{-1} \frac{\partial G}{\partial \mu} \, dS_{\bg} = o(1), \ \ r \to 0.
\end{align*}
Therefore, letting $r \to 0$ in (\ref{IGp}) gives (\ref{IG1}).

To prove (\ref{IG2}) we just integrate (\ref{GG}) over $M_r$:
\begin{align*}
2 \int_{M_r} G \, dv_{\bg} = \int_{M_r} \Delta_{\bg} G \, dv_{\bg} = \oint_{\partial B_r} \frac{\partial G}{\partial \mu} \, dS_{\bg}.
\end{align*}
Again, by the asymptotics of $G$ it is easy to check that
\begin{align*}
\oint_{\partial B_r} \frac{\partial G}{\partial \mu} \, dS_{\bg} = 4 \pi^2 + o(1), \ \ r \to 0,
\end{align*}
and (\ref{IG2}) follows.
\end{proof}

Combining (\ref{FS4}), and the formulas in Claim \ref{IntClaim}, we arrive at (\ref{FGform}) and (\ref{mv9a}).   \end{proof}

Theorem \ref{Thm2} now follows from (\ref{mv4}) and the result of Lemma \ref{FGLemma}. \end{proof}

\section{The proof of Theorem \ref{Thm2}}   \label{SecThm2} 

\begin{proof}  For $\rho >> 1$ sufficiently large, let $M_{\rho} = \widehat{M} \setminus \{ |x| > \rho \}$, and define
\begin{align} \label{Ir}
I_{\rho} = \int_{M_{\rho}} \langle \nabla F, \nabla G \rangle G^{-2} \, dv.
\end{align}
Applying Cauchy-Schwarz,
\begin{align} \label{rI2}
I_{\rho} \leq \left( \int_{M_{\rho}} |\nabla F|^2 \, dv \right)^{1/2} \left( \int_{M_{\rho}} |\nabla G|^2 G^{-4} \, dv \right)^{1/2}.
\end{align}
Since $dv = G^4 dv_{\bg}$,
\begin{align*}
\int_{M_{\rho}} |\nabla G|^2 G^{-4} \, dv = \int_{M_{\rho}} G^{-2}|\nabla_{\bg} G|_{\bg}^2  \, dv_{\bg}.
\end{align*}
Therefore, by Claim \ref{IntClaim} we have
\begin{align*}
\lim_{\rho \to \infty} \int_{M_{\rho}} |\nabla G|^2 G^{-4} \, dv = 2\overline{V},
\end{align*}
hence
\begin{align} \label{rI3}
\lim_{\rho \to \infty} I_{\rho} \leq  \left( \int_{\widehat{M}} |\nabla F|^2 \, dv \right)^{1/2} \left( 2 \overline{V} \right)^{1/2}.
\end{align}

\begin{lemma}
\begin{align} \label{dFmass}
\int_{\widehat{M}} |\nabla F|^2 \, dv \leq \frac{48}{5} \pi^2 m(g) - \frac{18}{5} \left( \frac{8}{3} \pi^2 - \overline{V} \right).
\end{align}
\end{lemma}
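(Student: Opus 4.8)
The plan is to obtain a second integral identity for $m(g)$ — complementary to the one derived in Theorem \ref{Thm1} — by multiplying the equation $\Delta F = 2G^{-1}|\mathring\nabla^2 G|^2$ (Proposition \ref{Prop1}) by $F$ itself and integrating over $M_\rho$, and then to combine it with the quantity $I_\rho$ of \eqref{Ir} to eliminate the (sign-indefinite, hard-to-control) cross term. First I would integrate $F\,\Delta F$ by parts over $M_\rho$: this produces $-\int_{M_\rho}|\nabla F|^2\,dv$, a boundary term on $\{|x|=\rho\}$, and the interior term $2\int_{M_\rho}F\,G^{-1}|\mathring\nabla^2 G|^2\,dv$, which is $\le 0$ since $F\le 0$ by Proposition \ref{AsFProp}. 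Using the expansions $F = -m|x|^{-2}+O(|x|^{-3})$ and $\nabla F = O(|x|^{-3})$ from \eqref{Fexp}, the boundary integral $\oint_{\{|x|=\rho\}}F\,\partial_\nu F\,dS$ tends to $0$ as $\rho\to\infty$; hence $\int_{\widehat M}|\nabla F|^2\,dv = -2\int_{\widehat M}F\,G^{-1}|\mathring\nabla^2 G|^2\,dv$, a clean nonnegativity statement but not yet an upper bound.

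Next I would relate this to the mass. Recall from Lemma \ref{dFLemma} that $\nabla_j F = -2P_{jk}\nabla_k G = 2G^{-1}(\mathring\nabla^2 G)_{jk}\nabla_k G$, so $\langle\nabla F,\nabla G\rangle = 2G^{-1}(\mathring\nabla^2 G)_{jk}\nabla_j G\nabla_k G$. This lets me rewrite $I_\rho = \int_{M_\rho}\langle\nabla F,\nabla G\rangle G^{-2}\,dv$ and, after integrating by parts once more (moving $\nabla G$ off and using $\Delta G = 2F+8$ from \eqref{DFG} together with $F = G^{-1}(1+|\nabla G|^2)-4$), express $I_\rho$ in terms of $\int|\nabla F|^2$, $\int F G^{-1}|\mathring\nabla^2 G|^2$, the mass integrand $2\int G^{-2}|\mathring\nabla^2 G|^2|\nabla G|^2$ appearing in \eqref{MV1}, and lower-order pieces controlled by Claim \ref{IntClaim}, Lemma \ref{FGLemma}, and Theorem \ref{Thm1}. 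The key algebraic point is that $|\nabla F|^2 = 4G^{-2}\big|(\mathring\nabla^2 G)(\nabla G,\cdot)\big|^2 \le 4G^{-2}|\mathring\nabla^2 G|^2|\nabla G|^2$ pointwise (Cauchy–Schwarz on the symmetric tensor acting on $\nabla G$), so the Hessian term $2\int G^{-2}|\mathring\nabla^2 G|^2|\nabla G|^2\,dv$ in \eqref{MV1} dominates $\tfrac12\int|\nabla F|^2\,dv$; substituting into $16\pi^2 m(g) = 6(\tfrac83\pi^2-\overline V) + 2\int G^{-2}|\mathring\nabla^2 G|^2|\nabla G|^2\,dv + \int|\nabla F|^2\,dv$ gives $16\pi^2 m(g) \ge 6(\tfrac83\pi^2-\overline V) + \tfrac32\int|\nabla F|^2\,dv$, i.e. $\int|\nabla F|^2\,dv \le \tfrac{32}{3}\pi^2 m(g) - 4(\tfrac83\pi^2-\overline V)$. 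To sharpen this to the stated constants $\tfrac{48}{5}\pi^2 m(g) - \tfrac{18}{5}(\tfrac83\pi^2-\overline V)$ I would not throw away the negative term $2\int F G^{-1}|\mathring\nabla^2 G|^2\,dv$ but instead track it through both identities and optimize the linear combination of the two mass formulas — choosing the weight so as to extract the best possible coefficient — which is where the fractions $48/5$ and $18/5$ will emerge.

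The main obstacle I anticipate is the bookkeeping in the second integration by parts defining $I_\rho$: one must carefully justify that all boundary terms at $\{|x|=\rho\}$ vanish in the limit (this follows from the $O_{2,\alpha}(|x|^{-1-\alpha})$ control in Proposition \ref{GxProp} and the expansion \eqref{Fexp}, but the decay rates are borderline and need to be checked term by term), and one must correctly identify which interior integrals are genuinely the mass integrand of \eqref{MV1} versus remainders handled by Claim \ref{IntClaim}. A secondary subtlety is that the pointwise inequality $|\nabla F|^2\le 4G^{-2}|\mathring\nabla^2 G|^2|\nabla G|^2$ alone only yields the weaker constant $32/3$; the improvement to $48/5$ genuinely requires exploiting the term $2\int F G^{-1}|\mathring\nabla^2 G|^2\,dv \le 0$ from the $F\,\Delta F$ identity, so the two computations must be done in tandem rather than separately.
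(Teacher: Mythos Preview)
Your proposal overcomplicates the argument and, more importantly, misses the single algebraic ingredient that produces the constants $48/5$ and $18/5$. The paper's proof is a direct substitution into the identity \eqref{MV1}; no second mass formula, no $F\Delta F$ identity, and no $I_\rho$ are needed. The entire content of the lemma is the pointwise estimate
\[
|\nabla F|^2 \;=\; 4\,P^2(\nabla G,\nabla G)\;\le\; 4\cdot\frac{n-1}{n}\,|P|^2\,|\nabla G|^2 \;=\; 3\,G^{-2}|\mathring\nabla^2 G|^2|\nabla G|^2,
\]
valid because $P$ is \emph{trace-free} (here $n=4$). This yields $2\int G^{-2}|\mathring\nabla^2 G|^2|\nabla G|^2 \ge \tfrac{2}{3}\int|\nabla F|^2$, and substituting into \eqref{MV1} gives
\[
16\pi^2 m(g)\;\ge\; 6\Big(\tfrac{8}{3}\pi^2-\overline V\Big) + \tfrac{5}{3}\int_{\widehat M}|\nabla F|^2\,dv,
\]
which is exactly \eqref{dFmass}.

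Your Cauchy--Schwarz bound $|(\mathring\nabla^2 G)(\nabla G,\cdot)|^2 \le |\mathring\nabla^2 G|^2|\nabla G|^2$ ignores the trace-free condition and loses the factor $3/4$; that is precisely why you land at $\tfrac{32}{3}$ instead of $\tfrac{48}{5}$. The fix you propose --- tracking the sign-definite term $2\int FG^{-1}|\mathring\nabla^2 G|^2\,dv$ through a second identity and ``optimizing a linear combination'' --- does not recover the lost constant: the $F\Delta F$ identity only says $\int|\nabla F|^2 = -2\int FG^{-1}|\mathring\nabla^2 G|^2$, which carries no new information beyond nonnegativity, and your manipulation via $I_\rho$ is both vague and circular (in the paper $I_\rho$ is estimated \emph{using} this lemma, not the other way around). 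The improvement from $4$ to $3$ is a pointwise Lagrange-multiplier fact about trace-free symmetric tensors, not something accessible by integrating by parts.
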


\begin{proof} Recall from Theorem \ref{Thm1} that 
\begin{align} \label{MV1star}
 16 \pi^2 m(g) = 6 \left( \frac{8}{3} \pi^2 - \overline{V} \right) + 2 \int_{\widehat{M}} |\mathring{\nabla}_g^2 G|^2 \dfrac{|\nabla G|^2}{G^2} \, dv_g + \int_{\widehat{M}} |\nabla_g F|^2 \, dv_g,
\end{align}

We begin by showing that
\begin{align} \label{KI}
2 \int_{\widehat{M}} |\mathring{\nabla}^2 G|^2 \dfrac{|\nabla G|^2}{G^2} \, dv \geq \frac{2}{3} \int_{\widehat{M}} |\nabla F|^2 \, dv.
\end{align}
To see this, note that by Lemma \ref{dFLemma},
\begin{align} \label{dF2}
|\nabla F|^2 = 4 P^2 (\nabla G, \nabla G). 
\end{align}

\begin{claim}  If $V$ is an $n$-dimensional real inner product space, and $A : V \times V \rightarrow \mathbb{R}$ is a trace-free, symmetric bilinear form, then for each $\xi \in V$
\begin{align} \label{sharp}
A^2(\xi,\xi) \leq \frac{n-1}{n}|A|^2 |\xi|^2.
\end{align}
\end{claim}

The claim follows from an elementary Lagrange multiplier argument, and the proof will be omitted.  Since $P$ is trace-free, (\ref{PGg}), (\ref{dF2}), and the claim
imply
\begin{align} \label{dF3}
|\nabla F|^2 \leq 3 |P|^2 |\nabla G|^2 = 3 G^{-2} |\mathring{\nabla}^2 G|^2 |\nabla G|^2,
\end{align}
and (\ref{KI}) follows.  Then substituting (\ref{KI}) into (\ref{MV1star}), we have
\begin{align} \label{MassFV}
 16 \pi^2 m(g) \geq 6 \left( \frac{8}{3} \pi^2 - \overline{V} \right) +  \frac{5}{3}\int_{\widehat{M}} |\nabla F|^2 \, dv,
\end{align}
and (\ref{dFmass}) follows.
\end{proof}

From (\ref{rI3}) and (\ref{dFmass}), we see that
\begin{align} \label{rI4}
\lim_{\rho \to \infty} I_{\rho} \leq  \left\{ \frac{96}{5} \pi^2 m(g) \overline{V} - \frac{36}{5} \left( \frac{8}{3} \pi^2 - \overline{V} \right) \overline{V} \right\}^{1/2},
\end{align}
and the expression in braces is non-negative thanks to (\ref{dFmass}).  
Returning to the definition of $I_{\rho}$ in (\ref{Ir}), we can also write
\begin{align} \label{rI5} \begin{split}
I_{\rho} &= \int_{M_{\rho}} \langle \nabla F, \nabla G \rangle G^{-2} \, dv \\
&= - \int_{M_{\rho}} \langle \nabla F, \nabla ( G^{-1} )\rangle \, dv.
\end{split}
\end{align}
Integrating by parts,
\begin{align} \label{rI6} \begin{split}
I_{\rho} &= - \int_{M_{\rho}} \langle \nabla F, \nabla ( G^{-1} )\rangle \, dv \\
&= \int_{M_{\rho}} G^{-1} \Delta F \, dv - \oint_{\{|x| = \rho\}} G^{-1} \frac{\partial}{\partial \nu} F \, dS.
\end{split}
\end{align}
Using the asymptotics of $F$ and $G$, it is easy to see that
\begin{align*}
\oint_{\{|x| = \rho\}} G^{-1} \frac{\partial}{\partial \nu} F \, dS = o(1), \ \ \rho \to \infty.
\end{align*}
Also, by Lemma \ref{FGLemma},
\begin{align} \label{rI7}
\int_{M_{\rho}}  G^{-1} \Delta F \, dv = 6 \left( \frac{8}{3} \pi^2  -  \overline{V}\right) + o(1), \ \ \rho \to \infty.
\end{align}
Therefore,
\begin{align} \label{rI8}
\lim_{\rho \to \infty} I_{\rho} = 6 \left( \frac{8}{3} \pi^2  -  \overline{V}\right).
\end{align}
If we compare this with (\ref{rI4}), we see that
\begin{align} \label{rI9}
6 \left( \frac{8}{3} \pi^2  -  \overline{V}\right) \leq \left\{ \frac{96}{5} \pi^2 m(g) \overline{V} - \frac{36}{5} \left( \frac{8}{3} \pi^2 - \overline{V} \right) \overline{V} \right\}^{1/2}.
\end{align}
Squaring both sides and rearranging terms, we arrive at (\ref{massgap}).
\end{proof}

\bigskip

\section{The proofs of Theorem \ref{Thm3} and its corollaries}  \label{SecThm3} 

 
\begin{proof}[Proof of Theorem \ref{Thm3}]  By Theorem A of \cite{Gur}, either $(M^4,\bg)$ is anti-self-dual, or the self-dual Weyl tensor satifies the bound
\begin{align} \label{L2W}
\int |W_{\bg}^{+}|^2 \, dv_{\bg} \geq \frac{4}{3} \pi^2 \left( 2 \chi(M) + 3 \tau(M) \right).
\end{align}
If $(M^4,\bg)$ is anti-self-dual, then by a result of Hitchin \cite{Hitchin} it is homothetically isometric to $(S^4,g_{round})$ or to $(-\mathbb{CP}^2, g_{FS})$.  Assume, therefore, that $(M^4, \bg)$ is not ASD.

Since $(M^4, \bg)$ is Einstein with $R_{\bg} = 12$, by the Chern-Gauss-Bonnet and signature formulas we have
\begin{align} \label{CGB}
2 \pi^2 \left( 2 \chi(M) + 3 \tau(M) \right) = \int |W_{\bg}^{+}|^2 \, dv_{\bg} + 3 \overline{V}.
\end{align}
Combining (\ref{L2W}) and (\ref{CGB}), we find that
\begin{align} \label{vb}
\overline{V} \leq \frac{2}{9} \pi^2 \left( 2 \chi(M) + 3 \tau(M) \right).
\end{align}
If we define 
\begin{align*}
\theta_{\bg} = \frac{3 \overline{V}}{8 \pi^2},
\end{align*}
then (\ref{vb}) implies
\begin{align} \label{tb}
\theta_{\bg} \leq \frac{1}{12} \left( 2 \chi(M) + 3 \tau(M) \right) := \theta_M.
\end{align}
Also, we can express the conclusion of Theorem \ref{Thm2} as
\begin{align*}
m(g) \geq \left( 1 -  \theta_{\bg}  \right) + 5  \theta^{-1}_{\bg} \left( 1 - \theta_{\bg} \right)^2.
\end{align*}
If $f : \mathbb{R}^{+} \rightarrow \mathbb{R}$ is given by
\begin{align*}
f(x) =  \left( 1 - x \right) + 5 x^{-1}  \left( 1 - x \right)^2,
\end{align*}
then $f$ is strictly decreasing for $0 < x < 1$.  Therefore, since $\theta_{\bg} \leq \theta_M$, it follows that
\begin{align} \label{miq}
m(g) \geq f(\theta_{\bg}) \geq f(\theta_M),
\end{align}
which gives (\ref{mt}).
\end{proof}

\medskip 

\subsection{The proof of Corollary \ref{S4Cor}}  Taking $\chi(M) = 2$ and $\tau(M) = 0$ we find $\theta_M = \frac{1}{3}$, and (\ref{mt}) implies
\begin{align*}
m(g) \geq f(\frac{1}{3}) = \frac{22}{3}.
\end{align*}

\medskip

\subsection{The proof of Corollary \ref{CP2Cor}} Taking $\chi(M) = 3$ and $\tau(M) = -1$ we find $\theta_M = \frac{1}{4}$, and (\ref{mt}) implies
\begin{align*}
m(g) \geq f(\frac{1}{4}) = 12.
\end{align*}
 
\medskip 

\section{The proofs of Theorem \ref{MinGThm} and \ref{DMassThm}} \label{minSec} 

\begin{proof}[The proof of Theorem \ref{MinGThm}]   If $\bg = g_0$ is the round metric, then $m(g) = 0$ and $\min G = 1/4$, so the result follows.  Therefore, we assume $\bg$ is 
not round and $m(g) > 3$.   On $\widehat{M}$ define 
\begin{align} \label{Hdef}
H_{\epsilon} = F + \left( m + \epsilon \right) G^{-1}.  
\end{align}
Let $\{ x^i \}$ be inverted normal coordinates.   By (\ref{Gx}) and part $(ii)$ of Proposition \ref{Prop1},
\begin{align} \label{Hex}
H_{\epsilon}(x) = \dfrac{\epsilon}{|x|^2} + O_{1,\alpha}(|x|^{-3}), \ \ |x| \to \infty. 
\end{align} 
For $\rho >> 1$ large let $M_{\rho} = \widehat{M} \setminus \{ |x| > \rho \}$.  Then (\ref{Hex}) implies that once $\rho = \rho_0 >> 1$ is large enough, the normal derivative of $H_{\epsilon}$ on $\{ |x| = \rho_0 \}$ (with respect to the 
outward pointing unit normal) is positive; hence the maximum of $H_{\epsilon}$ in $M_{\rho_0}$ cannot be attained on the boundary.   Let $p$ be a point in $M_{\rho_0}$ at which $H_{\epsilon}$ attains it maximum.   

We claim that $\nabla G(p) = 0$.  To see this, assume $\nabla G(p) \neq 0$, and compute the laplacian of $H_{\epsilon}$ using part $(i)$ of Proposition \ref{Prop1} and (\ref{DG}):  
\begin{align} \label{DH2} \begin{split}
\Delta H_{\epsilon} &= \Delta F + \left( m + \epsilon \right) \Delta G^{-1} \\
&= 2 G^{-1} |\mathring{\nabla}^2 G|^2 - 2 \left( m + \epsilon \right) G^{-3}. 
\end{split}
\end{align}
Since $H_{\epsilon}$ attains its maximum at $p$, at $p$ we have 
\begin{align} \label{DH3} 
0 \geq  2 G^{-1} |\mathring{\nabla}^2 G|^2 - 2 \left( m + \epsilon \right) G^{-3}. 
\end{align}
In addition, $\nabla H_{\epsilon}(p) = 0$, which implies 
\begin{align} \label{DH4} 
0 = \nabla F - \left( m + \epsilon \right) G^{-2} \nabla G, 
\end{align} 
hence 
\begin{align} \label{DH5} 
\nabla F =  \left( m + \epsilon \right) G^{-2} \nabla G
\end{align} 
at $p$.  

By (\ref{dF3}), 
\begin{align} \label{DH6} \begin{split}
2 G^{-1} |\mathring{\nabla}^2 G|^2 &=  2 \left( G^{-2}|\mathring{\nabla}^2 G|^2 |\nabla G|^2 \right)\left( \dfrac{G}{|\nabla G|^2} \right) \\
&\geq \frac{2}{3} |\nabla F|^2 \dfrac{G}{|\nabla G|^2}, 
\end{split}
\end{align}
where we are using the fact that $\nabla G(p) \neq 0$ by assumption.  Using (\ref{DH5}) it follows that at $p$, 
\begin{align} \label{DH7} 
2 G^{-1} |\mathring{\nabla}^2 G|^2 \geq \frac{2}{3} \left( m + \epsilon \right)^2 G^{-3}. 
\end{align}
Combining this with (\ref{DH3}), we find 
\begin{align} \label{DH8} \begin{split}
0 &\geq \left\{ \frac{2}{3} \left( m + \epsilon \right)^2  - 2 \left( m + \epsilon \right) \right\} G^{-3} \\
&=  \left( m + \epsilon \right) \left\{ \frac{2}{3} \left( m + \epsilon \right)  - 2  \right\} G^{-3}.
\end{split}
\end{align}
Since $m(g) > 3$ the right-hand side of (\ref{DH8}) is positive, which is a contradiction.  We conclude that $\nabla G(p) = 0$.  

Let $q$ be a point at which $G$ attains its minimum.  Then since $\nabla G(p) = \nabla G(q) = 0$, 
\begin{align*}
H_{\epsilon}(p) &= \left(1 + m + \epsilon \right) G^{-1}(p) - 4  \\
&\leq \left(1 + m + \epsilon \right) \left( \min G \right)^{-1} - 4 \\
&=H_{\epsilon}(q). 
\end{align*}
Consequently, the maximum of $H_{\epsilon}$ in $M_{\rho_0}$ is attained at $q$, and 
\begin{align*}
\max_{M_{\rho_0}} H_{\epsilon} &= H_{\epsilon}(q) \\
&= \left(1 + m + \epsilon \right) \left( \min G \right)^{-1} - 4. 
\end{align*}
Also, 
\begin{align*}
\max_{M_{\rho_0}} H_{\epsilon} &\geq \max_{\{ |x| = \rho_0 \}} H_{\epsilon} \\
&= \epsilon \rho_0^{-2} + O(\rho_0^{-3}).
\end{align*}
Since this holds for all $\rho_0 >> 1$ sufficiently large, we conclude  
\begin{align} \label{maxHe}
\left(1 + m + \epsilon \right) \left( \min G \right)^{-1} - 4 \geq 0,
\end{align}
or 
\begin{align*}
m + \epsilon \geq 4 \min G - 1. 
\end{align*}
As this holds for all $\epsilon > 0$, the result follows. 
\end{proof}

\medskip 

\begin{proof}[The proof of Theorem \ref{DMassThm}]  By Proposition \ref{AsFProp},
\begin{align} \label{G1est}
|\nabla G|^2 \leq 4 G - 1 = 4 \left( G - \frac{1}{4} \right)
\end{align}
on $M^4 \setminus \{ 0 \}$.  Since $|\nabla G|^2 = G^{-2} |\overline{\nabla} G|_{\bg}^2$, where $\overline{\nabla}$ is the gradient of $G$ with respect to $\bg$, it follows that on $M \setminus \{ 0 \}$ we have 
\begin{align} \label{DH34}
\dfrac{|\overline{\nabla}G|_{\bg}}{2 G \sqrt{ G - \frac{1}{4} }} \leq 1.
\end{align}
Define $f : M^4 \setminus \{ 0 \} \rightarrow \mathbb{R}$ by 
\begin{align} \label{fdef}
f = 2  \arctan \left( 2 \left[ G - \frac{1}{4} \right]^{1/2} \right).
\end{align}
Then (\ref{DH34}) implies 
\begin{align} \label{DH35} 
|\overline{\nabla}f|_{\bg} \leq 1. 
\end{align}
Let $q \in M^4$ be a point at which $G(q) = \min G$.  Let $L = \mbox{dist}_{\bg}(q,0)$; then by Myers' theorem, $L \leq \pi$.  Let $\gamma : [0,L] \to M^4$ be a unit speed geodesic from $q$ to $0$.  Then for any $0 \leq \tau < L$, 
\begin{align} \label{DH36} \begin{split}
f\left( \gamma(\tau) \right) - f\left( \gamma(0) \right) &= \int_0^{\tau} \frac{d}{dt} f(\gamma(t)) \, dt \\
&= \int_0^{\tau} \left\langle \bar{\nabla}f, \frac{d\gamma}{dt} \right\rangle_{\bg} \, dt \\
&\leq \int_0^{\tau} \left|\bar{\nabla}f\right|_{\bg} \left|\frac{d\gamma}{dt}\right|_{\bg} \, dt \\
&\leq \tau. 
\end{split}
\end{align}
If we denote $q_{\tau} = \gamma(\tau)$, then 
\begin{align} \label{DH38}
2 \arctan \left( 2 \left[ G(q_{\tau}) - \frac{1}{4} \right]^{1/2} \right) - 2 \arctan \left( 2 \left[ \min G - \frac{1}{4} \right]^{1/2} \right)  \leq \tau,
\end{align}
and using the formula for the difference of the inverse tangent of two numbers we obtain 
\begin{align} \label{DH389}
2 \arctan \left( \dfrac{ 2 \left( \left[ G(q_{\tau}) - \frac{1}{4} \right]^{1/2} - \left[ \min G - \frac{1}{4}\right]^{1/2} \right)}{ 1 + 4  \left[ G(q_{\tau}) - \frac{1}{4} \right]^{1/2} \left[ \min G - \frac{1}{4} \right]^{1/2}} \right) \leq \tau. 
\end{align}
Dividing by two and taking the tangent, 
\begin{align} \label{DH390}
\dfrac{ 2 \left( \left[ G(q_{\tau}) - \frac{1}{4} \right]^{1/2} - \left[ \min G - \frac{1}{4}\right]^{1/2} \right)}{ 1 + 4  \left[ G(q_{\tau}) - \frac{1}{4} \right]^{1/2} \left[ \min G - \frac{1}{4} \right]^{1/2}} \leq \tan \frac{1}{2} \tau. 
\end{align}
Letting $\tau \to L$, since $G(q_{\tau}) \to \infty$ we get 
\begin{align} \label{DH391} 
\dfrac{1}{2 \left[ \min G - \frac{1}{4} \right]^{1/2}} \leq \tan \frac{1}{2} L \leq \tan \frac{1}{2} \mbox{diam}(\bg),
\end{align} 
which implies 
\begin{align} \label{DH392} 
\mbox{diam}(\bg) \geq 2 \arctan \dfrac{1}{2 \left[ \min G - \frac{1}{4} \right]^{1/2}} = 2 \arctan \dfrac{1}{\sqrt{ 4 \min G - 1 }}. 
\end{align}
Recall from Theorem \ref{MinGThm} that $m(g) \geq 4 \min G - 1$, hence 
\begin{align*}
\mbox{diam}(\bg) \geq 2 \arctan \dfrac{1}{\sqrt{m(g)}}. 
\end{align*}

\end{proof}

\vskip.5in

%
%

\end{document}